\def\hpp{${\cal HPP}$\ }
\renewcommand{\vec}[1]{\mathbf{#1}}
\def\veca{{\boldsymbol{\alpha}}}
\def\binomh#1#2{ \scalebox{.3}[1.2]{\textbf{)}}{\genfrac{}{}{0pt}{}{#1}{#2}}\scalebox{.3}[1.2]{\textbf{(}} }
\newtheorem{theorem}{Theorem}
\newtheorem{rem}{Remark}
\title{\bf Hyperbolic Pascal pyramid 
}
\author{ L\'aszl\'o N\'emeth\footnote{University of West Hungary,  Institute of Mathematics, Hungary. \textit{nemeth.laszlo@emk.nyme.hu}}}
\date{}
\begin{document}

\maketitle

\begin{abstract}
In this paper we introduce a new type of Pascal's pyramids. The new object is called hyperbolic Pascal pyramid since the mathematical background goes back to the regular cube mosaic (cubic honeycomb) in the hyperbolic space. The definition of the hyperbolic Pascal pyramid is a natural generalization of the definition of hyperbolic Pascal triangle (\cite{BNSz}) and Pascal's arithmetic pyramid. We describe the growing of hyperbolic Pascal pyramid considering the numbers and the values of the elements. Further figures illustrate the stepping from a level to the next one.   \\[1mm]
{\em Key Words: Pascal pyramid, cubic honeycomb, regular cube mosaic in hyperbolic space.}\\
{\em MSC code:  52C22, 05B45, 11B99.}    
\
\end{abstract}

\section{Introduction}\label{sec:introduction} 

There are several approaches to generalize the Pascal's arithmetic triangle (see, for instance \cite{BSz}). A new type of variations of it is based on the hyperbolic regular mosaics denoted by Schl\"afli's symbol $\{p,q\}$, where $(p-2)(q-2)>4$ (\cite{C}). Each regular mosaic induces a so called hyperbolic Pascal triangle (see \cite{BNSz, NSz1}), following and generalizing the connection between the classical Pascal's triangle and the Euclidean regular square mosaic $\{4,4\}$. For more details see \cite{BNSz}, but here we also collect some necessary information. 

The hyperbolic Pascal triangle based on the mosaic $\{p,q\}$ can be figured as a digraph, where the vertices and the edges are the vertices and the edges of a well defined part of  lattice $\{p,q\}$, respectively, and the vertices possess a value that give the number of different shortest paths from the base vertex to the given vertex. Figure~\ref{fig:Pascal_layer6} illustrates the hyperbolic Pascal triangle when $\{p,q\}=\{4,5\}$. 
Here the base vertex has two edges, the leftmost and the rightmost vertices have three, the others have five edges. The quadrilateral shape cells surrounded by the appropriate edges correspond to the squares in the mosaic.
Apart from the winger elements, certain vertices (called ``Type $A$'') have $2$ ascendants and $3$ descendants, while the others (``Type $B$'') have $1$ ascendant and $4$ descendants. In the figures we denote  vertices type $A$ by red circles and  vertices type $B$ by cyan diamonds, further the wingers by white diamonds (according to the denotations in \cite{BNSz}). The vertices which are $n$-edge-long far from the base vertex are in row $n$. 
The general method of preparing the graph is the following: we go along the vertices of the $j^{\text{th}}$ row, according to the type of the elements (winger, $A$, $B$), we draw the appropriate number of edges downwards ($2$, $3$, $4$, respectively). Neighbour edges of two neighbour vertices of the $j^{\text{th}}$ row meet in the $(j+1)^{\text{th}}$ row, constructing a new vertex type $A$. The other descendants of row $j$ have type $B$ in row $j+1$.
In the sequel, $\binomh{n}{k}$ denotes the $k^\text{th}$ element in row $n$, which is either the sum of the values of its two ascendants or the value of its unique ascendant. We note, that the hyperbolic Pascal triangle has the property of vertical symmetry. 

\begin{figure}[h!]
 \centering
  \includegraphics[width=0.99\linewidth]{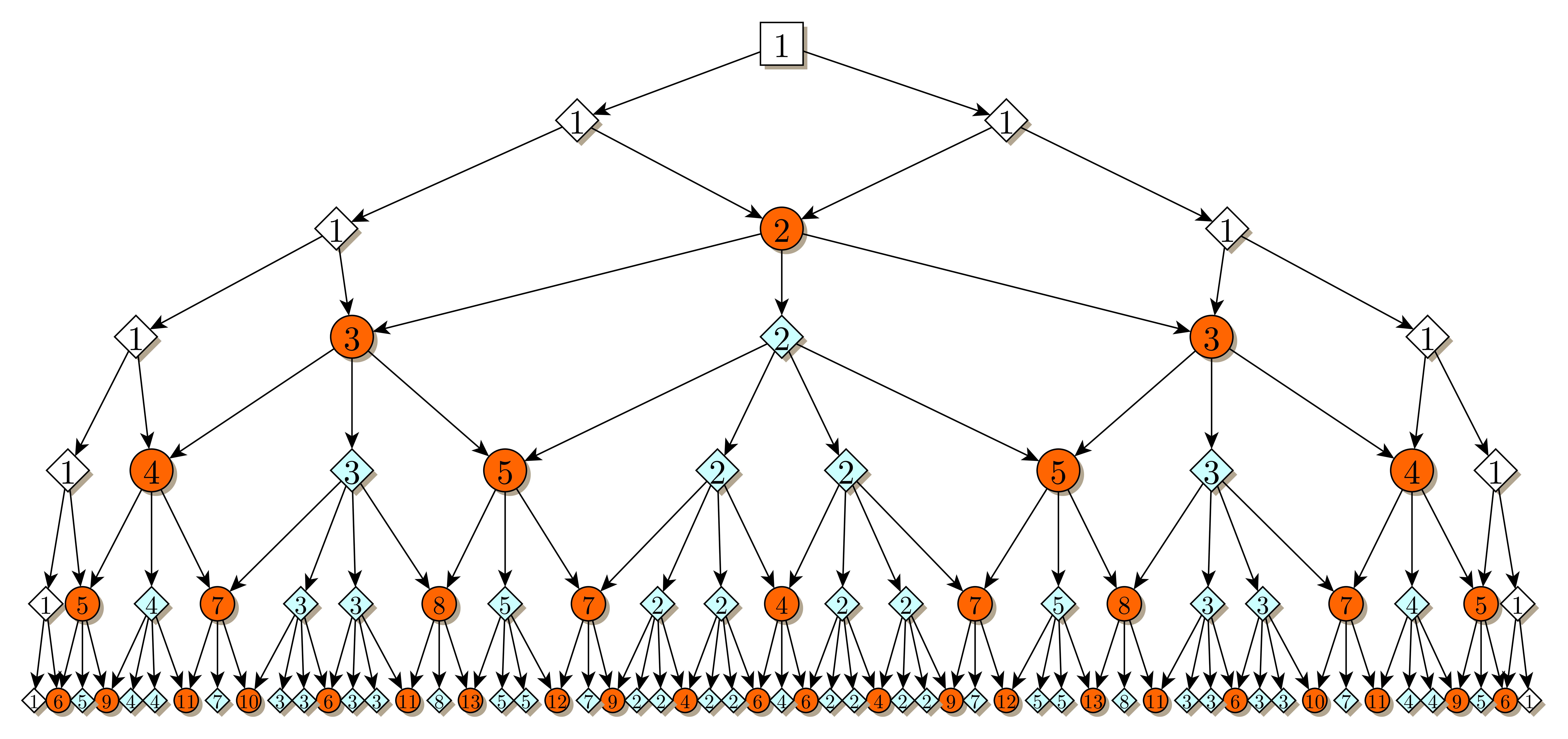}
 \caption{Hyperbolic Pascal triangle linked to $\{4,5\}$ up to row 6}
 \label{fig:Pascal_layer6}
\end{figure}

The 3-dimensional analogue of the original Pascal's triangle is the well-known Pascal's pyramid (or more precisely Pascal's tetrahedron) (left part in Figure~\ref{fig:Eulidean_pyramid}). 
Its levels are triangles and the numbers along the three edges of the $n^{\text{th}}$ level are the numbers of the $n^{\text{th}}$ line of Pascal's triangle. 
Each number inside in any levels is the sum of the three adjacent numbers on the level above \cite{B, har}.

In the following we define a Pascal pyramid in the hyperbolic space based on the hyperbolic regular cube mosaic (cubic honeycomb) with Schl\"afli's symbol $\{4,3,5\}$ as a generalisation of the hyperbolic Pascal triangle and the classical Pascal's pyramid which are based on the hyperbolic planar mosaic $\{4,5\}$ and  the Euclidean regular cube mosaic $\{4,3,4\}$, respectively. (We write the hyperbolic  one without an ``apostrophe", similarly to the writing of the classical Pascal's triangle and the hyperbolic Pascal triangle.)

\section{Construction of the hyperbolic Pascal pyramid}

In the hyperbolic space there are 7 regular mosaics and one of them is the regular cube mosaic $\{4,3,5\}$ (see \cite{C}), which is the hyperbolic analogue of the Euclidean regular cube mosaic.

First of all for the further examination we summarise some properties of the cubic honeycomb $\{4,3,5\}$, which is not as well-known as the Euclidean one. 
The vertex figures of the hyperbolic cube mosaic are icosahedra with Schl\"afli's symbol $\{3,5\}$. Thus the nearest vertices to a certain vertex $V$ form an icosahedron in this mosaic.  It means, considering an arbitrary vertex $V$ of the mosaic, that the number of  cubes around $V$ is as many as the number of the faces of the icosahedron, namely 20 and the number of the mosaic edges from $V$ (degree of $V$) is as many as the number of vertices on an icosahedron, namely 12. There are 5 cubes around a mosaic edge as there are 5 faces around a vertex on the icosahedron. In Figure~\ref{fig:belt0} we can see a vertex figure with one cube and the twenty cubes around the centre $V$ of the icosahedron. Vertex $X$ and $W$ are two nearest vertices of the mosaic to $V$ and around the edge $V$-$W$ there are 5 cubes.
We mention that the edges of the icosahedron are not the edges of the mosaic, they are the face diagonals of the cubes.

\begin{figure}[h!]
 \centering
 \includegraphics[scale=0.98]{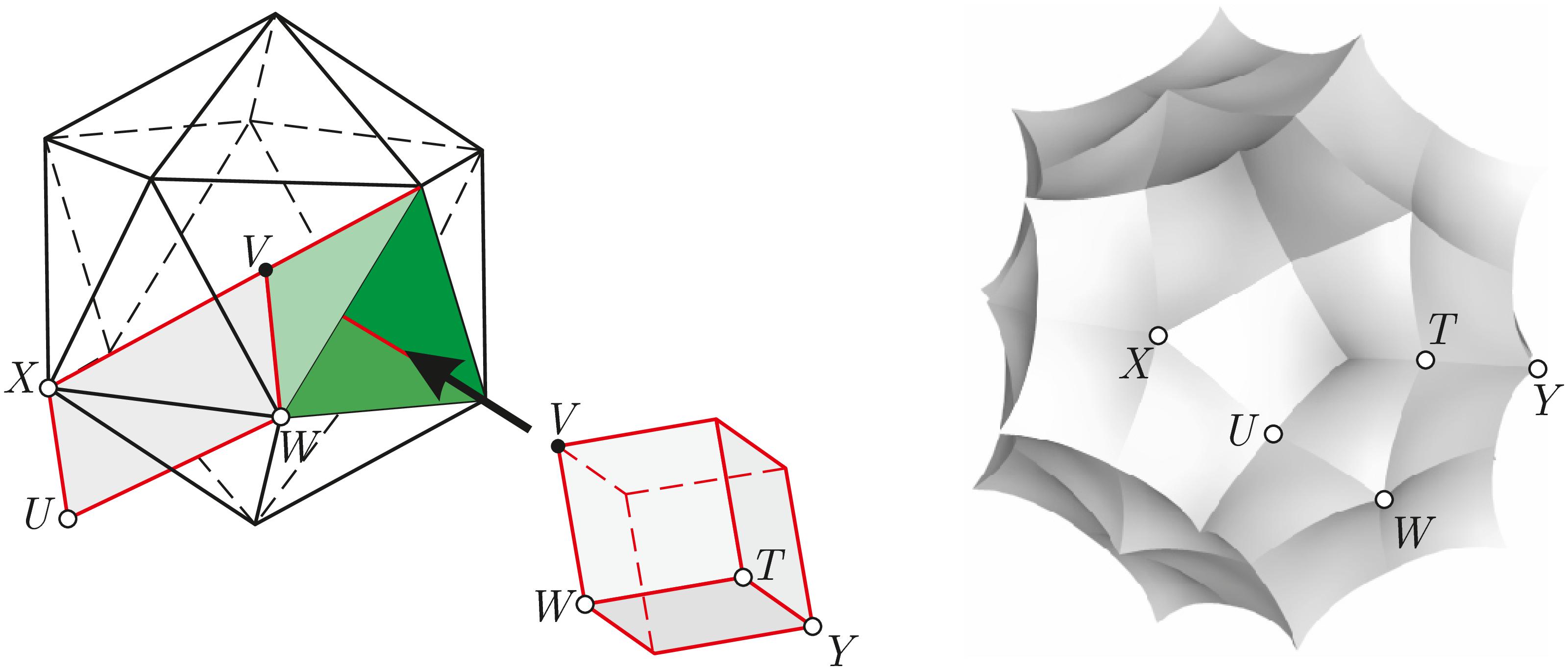}
  \caption{Vertex figure of $V$ and cubes around $V$}
 \label{fig:belt0}
\end{figure}

Now, we consider the hyperbolic (and Euclidean) cubic honeycomb. We define the part ${\cal P}$ of the mosaic which can induce the hyperbolic Pascal pyramid (and the classical  Pascal's pyramid).

Take a cube of the mosaic as a base cell of ${\cal P}$ and let $V_0$ be a vertex of it. Take the three cubes of the mosaic which have common faces with the base cell but do not contain $V_0$. (In the right part of Figure~\ref{fig:border3d} we can see the construction of ${\cal P}$ and by comparison the left part shows the Euclidean one.) Reflect these cubes across their own faces which are opposite the touching faces with the base cube. Reflect again the new cubes across the faces which are opposite the previous cubes, and so on limitless. This way we give the "edge"s of the border of ${\cal P}$ (blue cubes in Figure~\ref{fig:border3d}) and the convex parts of the mosaic-levels defined by any two ``edge"s give the border of ${\cal P}$. Finally, the convex part of the bordered parts of the mosaic is the well defined ${\cal P}$. The shape of this convex part of the mosaic resembles an infinite tetrahedron. 
                                                                  
Let ${\cal G}_{\cal P}$ be the graph, in which the vertices and edges are the vertices and edges of ${\cal P}$. We label an arbitrary vertex $V$ of ${\cal G}_{\cal P}$  by the number of different shortest paths along the edges of ${\cal P}$ from $V_0$ to $V$. We mention that all the edges of the mosaic are equivalent. Some labelled vertices can be seen in Figure \ref{fig:border3d}.
Let the labelled ${\cal G}_{\cal P}$ be the hyperbolic Pascal pyramid (more precisely the hyperbolic Pascal tetrahedron), denoted by ${\cal HPP}$.  Considering the Euclidean mosaic  $\{4,3,4\}$ instead of the hyperbolic one in the definition above the classical Pascal's pyramid returns.
  
\begin{figure}[h!]
 \centering
 \includegraphics{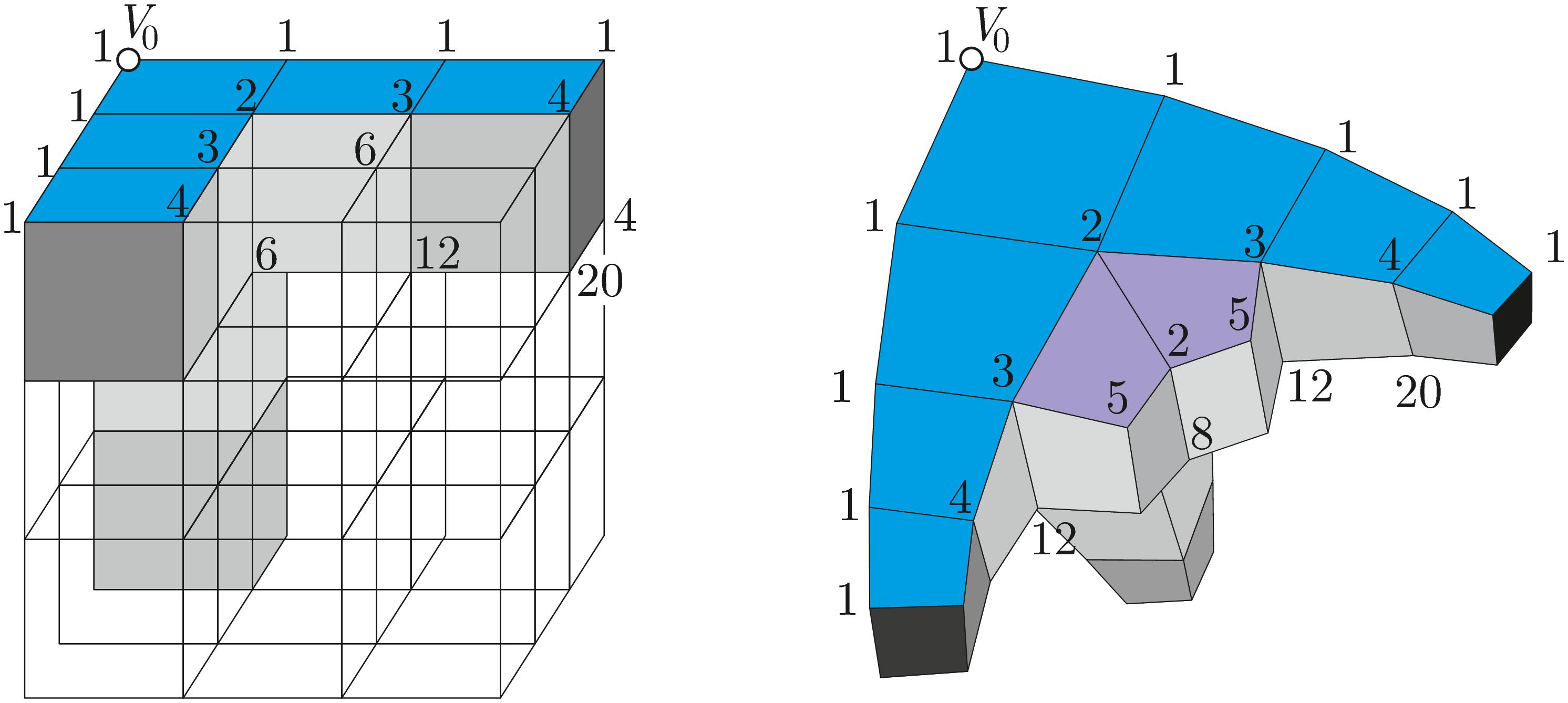}
 \caption{Construction of the border of $\cal{P}$}
 \label{fig:border3d}
\end{figure}

Let level~0 be the vertex $V_0$. Level~$n$ consists of the vertices of \hpp whose edge-distances from $V_0$ are $n$-edge (the distance of the shortest path along the edges of ${\cal P}$ is $n$).   
It is clear, that the labelled graphs indicated by  the outer boundaries of ${\cal P}$ are the hyperbolic Pascal triangles based on the regular hyperbolic planar mosaic $\{4,5\}$. 

The right part of Figure~\ref{fig:hyperbolic_pyramid} shows the hyperbolic Pascal pyramid up to level~4, when the digraph ${\cal G}_{\cal P}$ is directed from $V_0$ according to the growing  distance from $V_0$ (compare Figures \ref{fig:border3d} and \ref{fig:hyperbolic_pyramid}). 
Moreover, Figures~\ref{fig:layer3to4} and \ref{fig:layer4to5} show the growing from a level to the next one in case of some lower levels. The colours and shapes of different types of the vertices are different. (See the definitions later.) The numbers without colouring and shapes refer to vertices in the lower level in every figure. The graphs growing from a level to the new one contain graph-cycle with six nodes. These graph-cycles figure the convex hulls of the parallel projections of the cubes from the mosaic, where the direction of the projection is not parallel to any edges of the cubes.   
                                                                             
\begin{figure}[h!]
 \centering
 \includegraphics[width=0.48\textwidth]{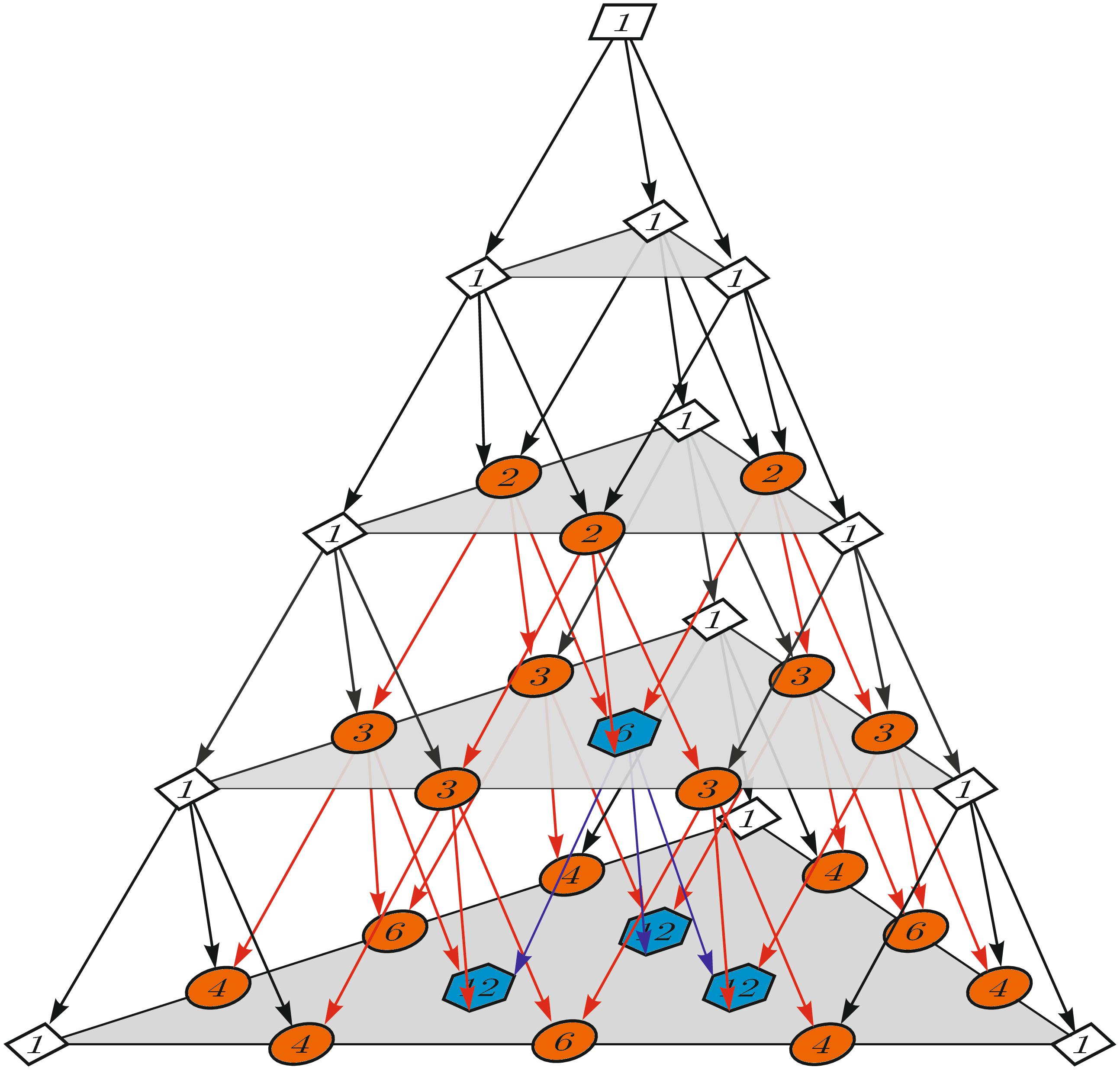}  \includegraphics[width=0.48\textwidth]{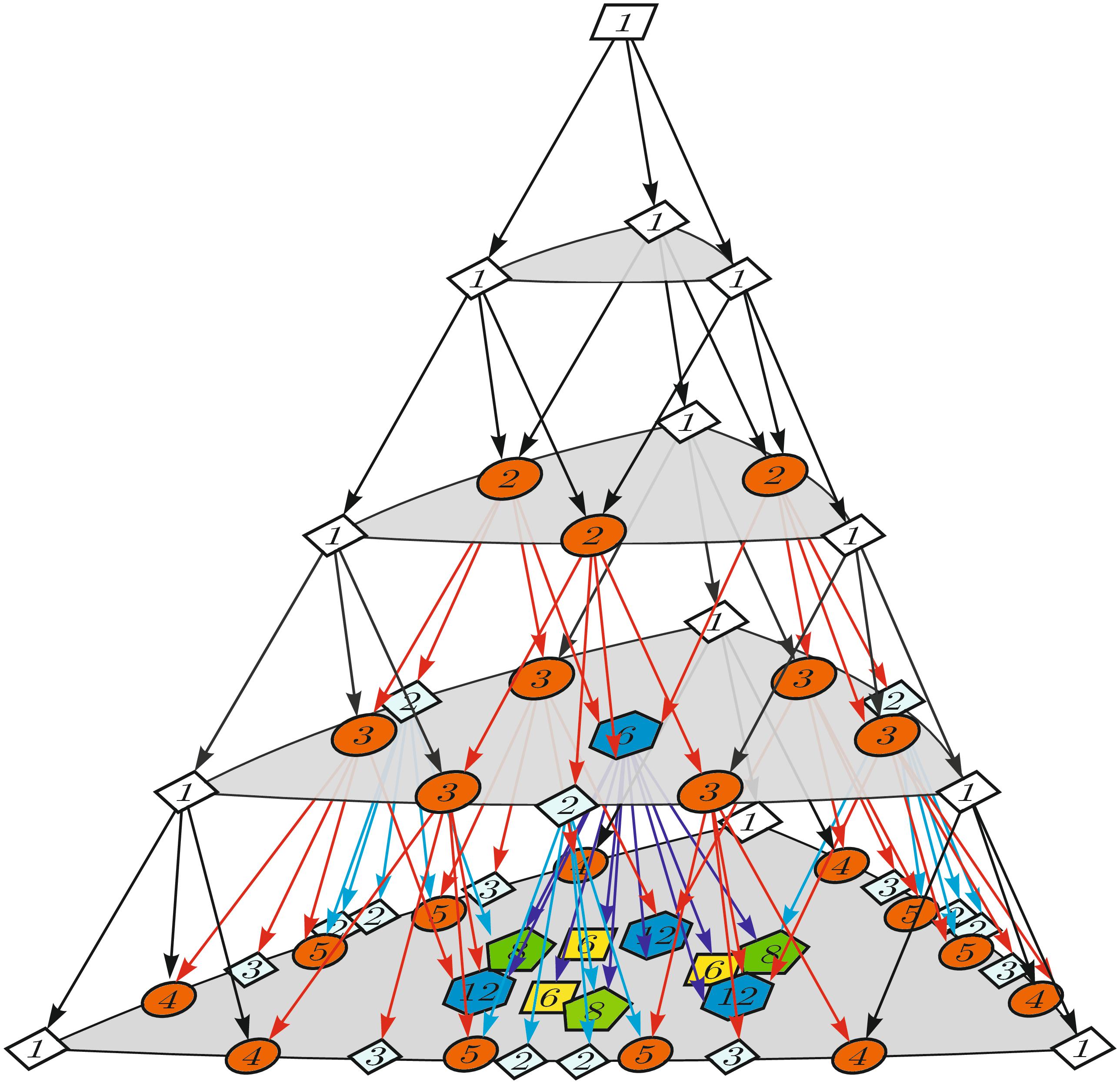}
 \caption{Euclidean and hyperbolic Pascal pyramid}
 \label{fig:hyperbolic_pyramid}
 \label{fig:Eulidean_pyramid}
\end{figure}

\begin{figure}[h!]
 \centering
 \includegraphics[scale=0.8]{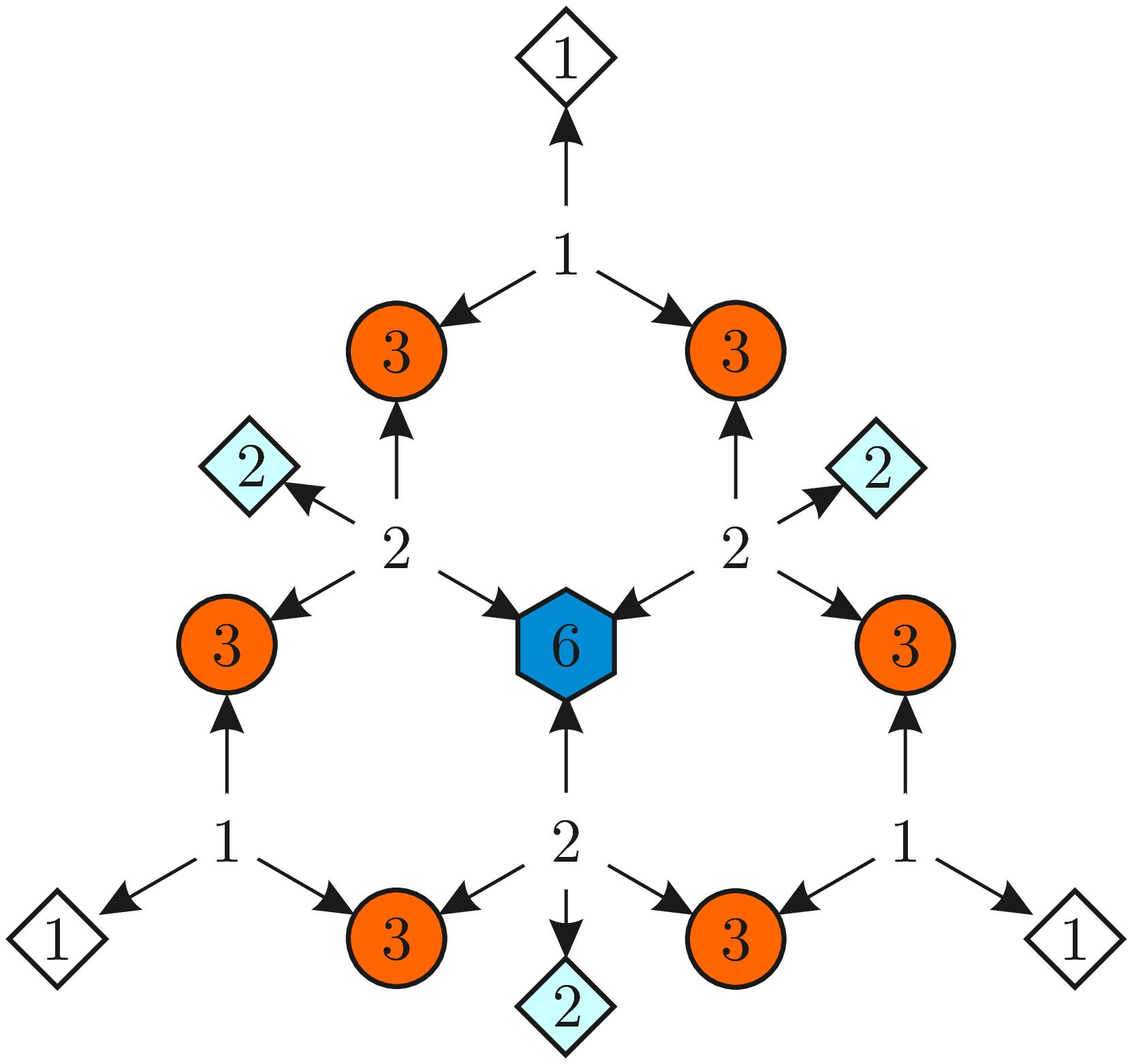}
 \includegraphics[scale=0.8]{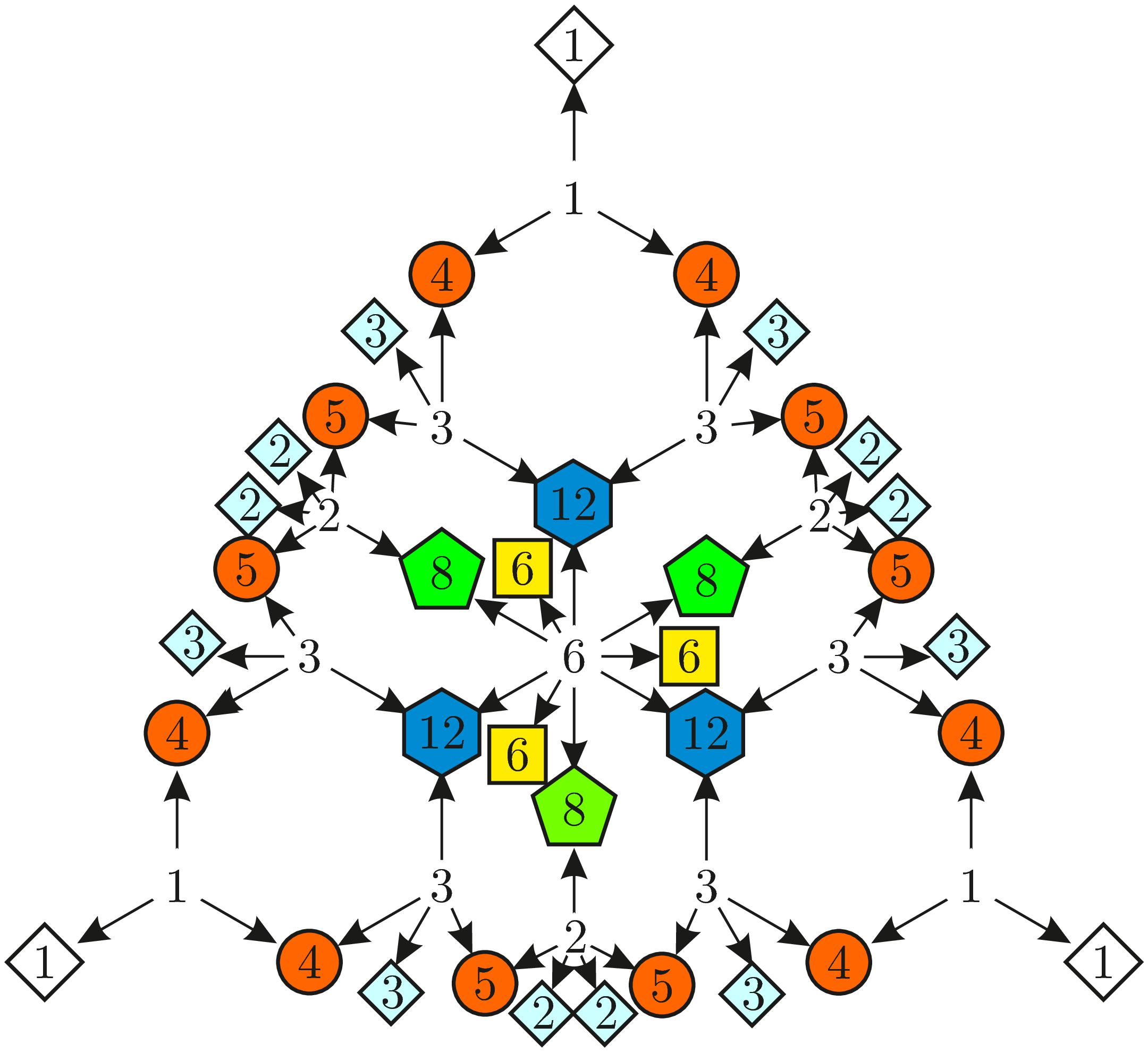}
 \caption{Connection between levels two, three and four in \hpp}
 \label{fig:layer3to4}
\end{figure}

\begin{figure}[ht!]
 \centering
 \includegraphics[width=0.9\linewidth]{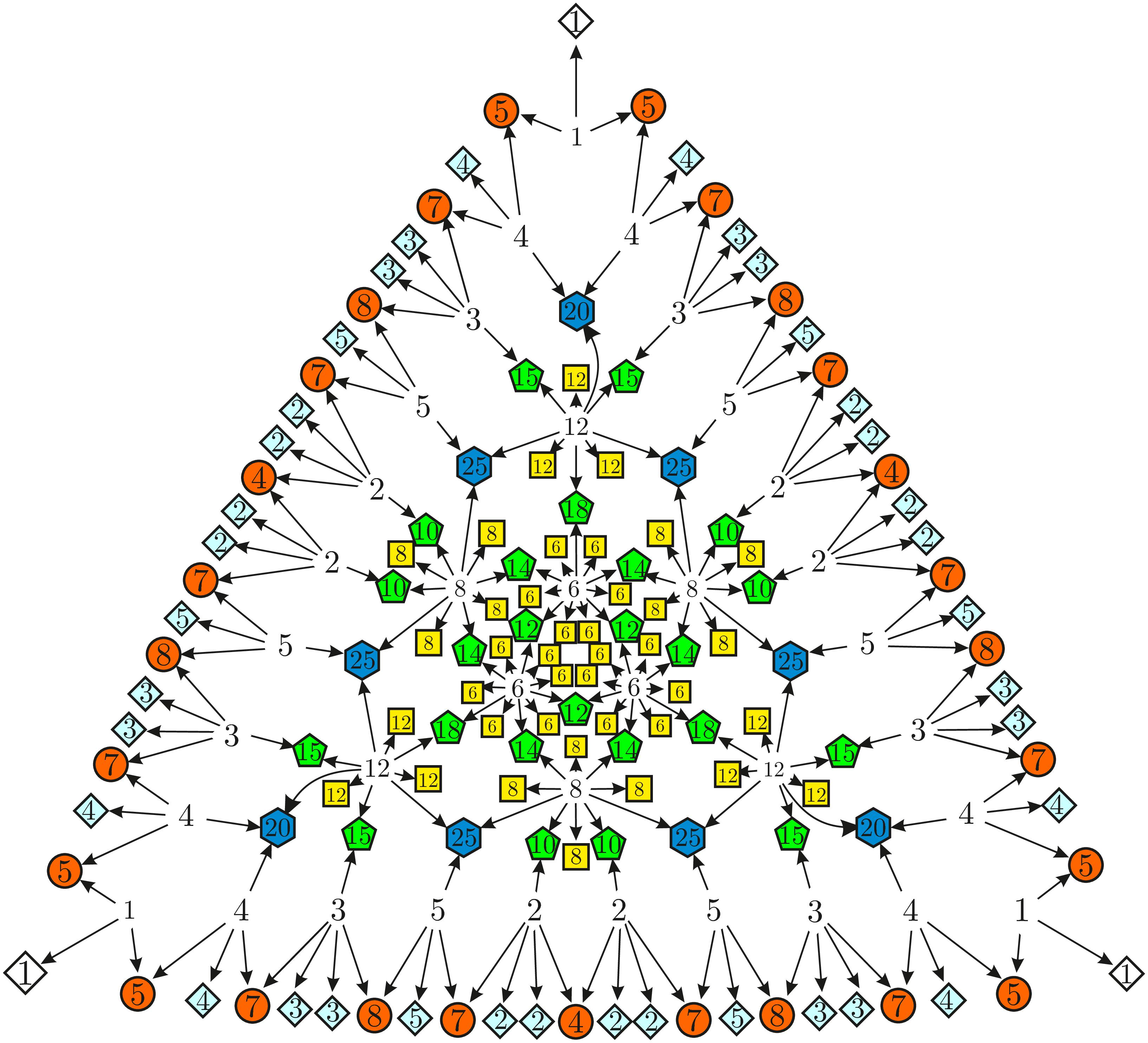}
 \caption{Connection between levels four and five in \hpp}
 \label{fig:layer4to5}
\end{figure}

In the following we describe the method of the growing of the hyperbolic Pascal pyramid and we give the sum of the paths connecting vertex $V_0$ and level $n$.

\section{Growing of the hyperbolic Pascal pyramid}

In the classical Pascal's pyramid the number of the elements on  level $n$ is $(n+1)(n+2)/2$ and its growing  from level $n$ to $n+1$ is $n+2$, on the contrary in the hyperbolic Pascal pyramid it is more complex. 

As the faces of \hpp are the hyperbolic Pascal triangles, then here are three types of vertices $A$, $B$ and $1$ corresponding to the Introduction and \cite{BNSz}. From all $A$ and $B$ start only one edge to the inside of the pyramid, because five cubes close around an edge of the mosaic (see Figure \ref{fig:border3d}). The types of inside vertices of these edges differ from the types $A$ and $B$, denote them by type $C$ and type $D$, respectively. 
The left part of Figure~\ref{fig:from_border} presents a cube, in which the upper face is on the border of \hpp and a vertex $A$ on level $i$ generates a vertex $C$ inside of \hpp with 3 incoming edges. The right part shows that all vertices $B$ imply a vertex $D$ with two incoming edges.

\begin{figure}[h!]
 \centering
 \includegraphics[scale=0.85]{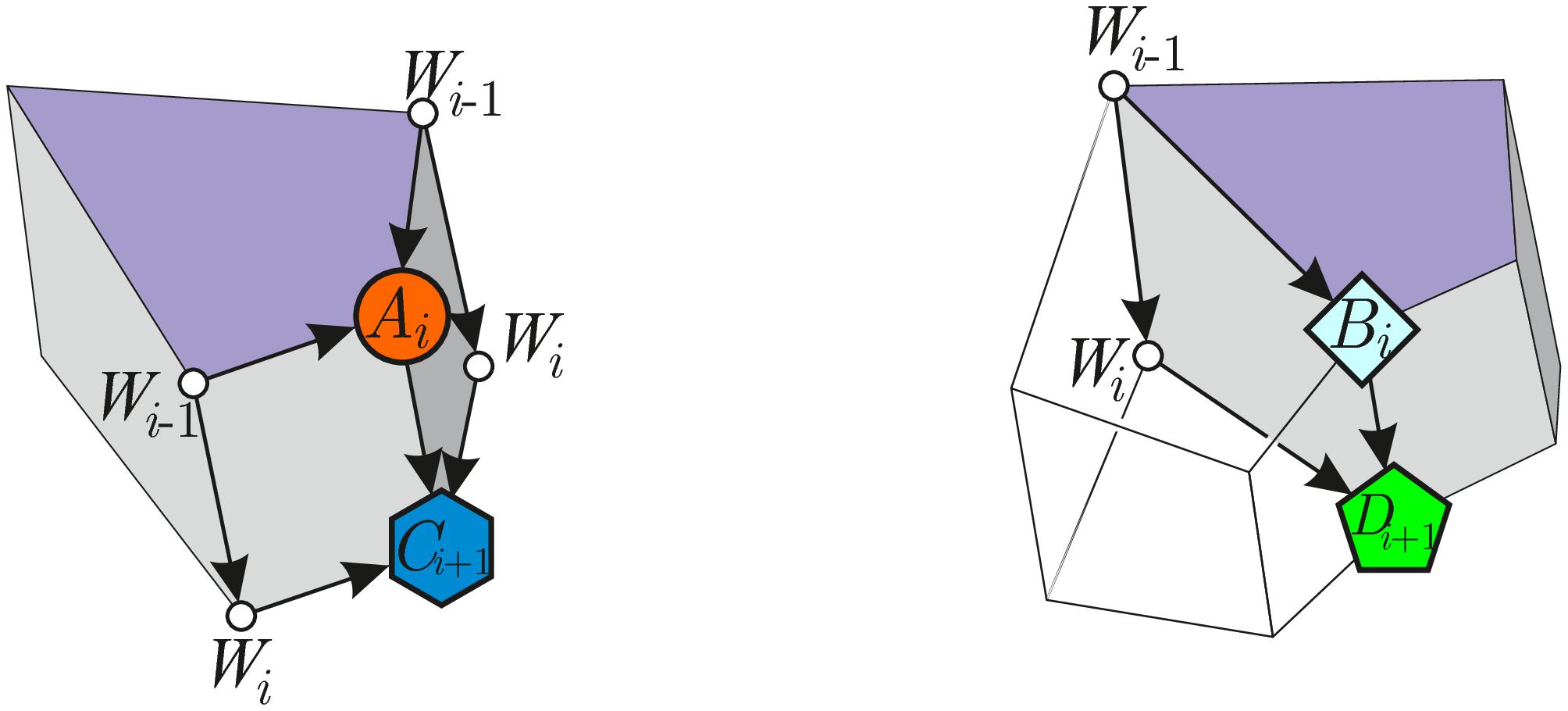}
 \caption{Groing from border to inside}
 \label{fig:from_border}
\end{figure}

The growing methods of them are illustrated in Figure~\ref{fig:gowing3d_1} (compare it with the growing method in \cite{BNSz}).

\begin{figure}[h!]
 \centering
 \includegraphics[scale=0.8]{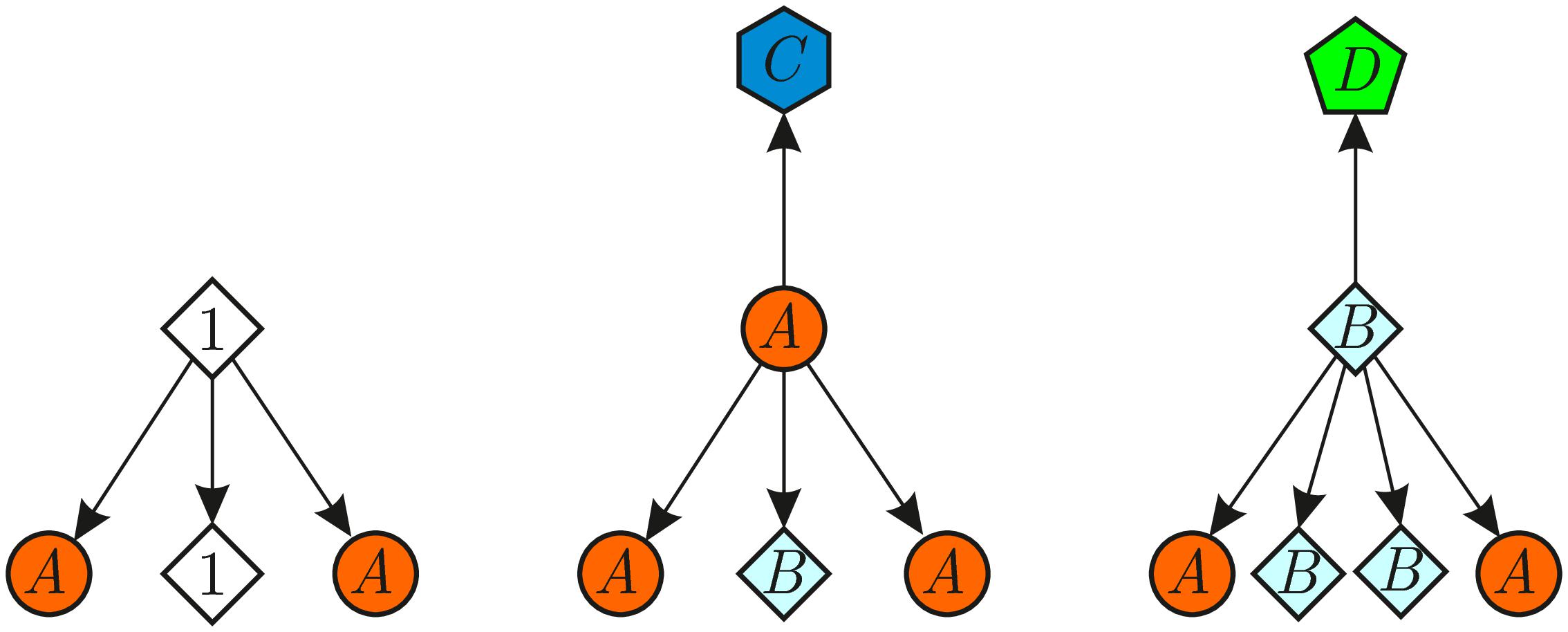}
  \caption{Growing method in case of the faces}
 \label{fig:gowing3d_1}
\end{figure}

As a cube has three edges in all vertices, then during the growing (step from level $i-1$ to level $i$) an inner arbitrary vertex $V$ on level $i$ can be reached from level $i-1$ with three, two or just one edges. This fact allows us a classification of the inner vertices. Let the type of a vertex on level $i$ is $C$, $D$ or $E$, respectively, if it has three, two or one joining edges to level $i-1$ (as before). Figure~\ref{fig:gowing3d_icosa} shows vertex figures of the inner vertices of \hpp\!\!. Vertices $W_{i-1}$ (small green circles) are on  level $i-1$, we don't know their types (or not important to know) and the centres are on  level~$i$. 
The other vertices of the icosahedron are on level $i+1$ and the classification of them gives their types. An edge of the icosahedron and its centre $V$ determine a square (a face of a cube) from the mosaic. (Recall, that an edge of the icosahedron is a diagonal of a face of a cube from the mosaic.)  Since from a vertex of a square we can go to the opposite vertex two ways, then a vertex $X$ of the icosahedron, where $X$ and a $W_{i-1}$ are connected by an edge, can be reached with two paths from level $i-1$. (For example in Figure~\ref{fig:belt0}, between vertex $W$ and $X$ there are the paths $W\!-\!V\!-\!X$ and $W\!-\!U\!-\!X$.)  So, the type of the third vertex of the faces on the icosahedron whose other two vertices are $W_{i-1}$ are $C$. The types of the vertices which connect to only one $W_{i-1}$ with  icosahedron-edge are $D$, the others are type $C$. See Figure~\ref{fig:gowing3d_icosa}. In case of the vertex figure of $C$ or $D$ a vertex $W_{i-1}$ can be  vertex $A$ or $B$, respectively.   
In the figures we denote  vertices type $C$ by blue hexagons,  vertices type $D$ by green pentagons and  vertices type $E$ by yellow squares. The blue thick directed edges are mosaic-edges between levels $i-1$ and $i$, while the red thin ones are between levels $i$ and $i+1$. We mention that in case of Pascal's pyramid there are only type $C$ inner vertices.

\begin{figure}[h!]
 \centering
 \includegraphics[width=0.99\linewidth]{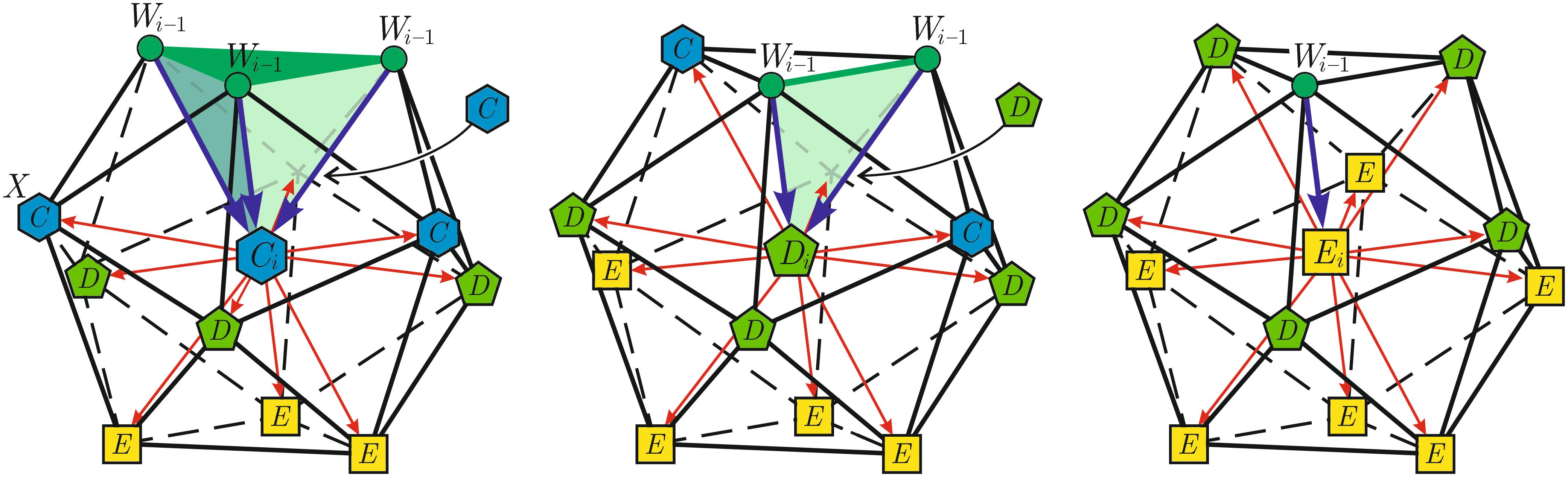}
 \caption{Growing method in case of the inner vertices with icosahedra}
 \label{fig:gowing3d_icosa}
\end{figure}

In Figure~\ref{fig:gowing3d_2} the growing method is presented in case of the inner vertices. They come from the centres and vertices from the icosahedra in Figure~\ref{fig:gowing3d_icosa}. 

\begin{figure}[h!]
 \centering
 \includegraphics[scale=0.85]{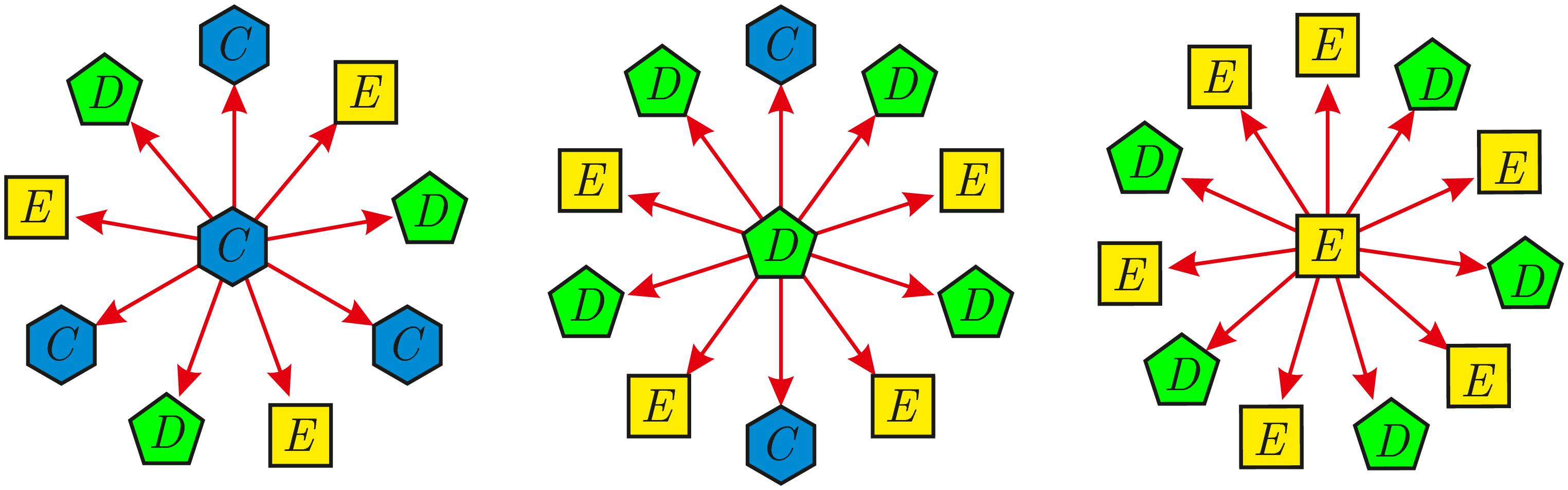}
 \caption{Growing method in case of the inner vertices}
 \label{fig:gowing3d_2}
\end{figure}

Three new $C$, $D$ and $E$ connect for all  vertices type $C$, but from the explanation above all new vertices $C$ and $D$ connect altogether three or two other vertices on level $i$, respectively. So, for the correct calculation we correspond just one third or half of them to the examined vertices $C$, respectively. All the new vertices $C$ connect to just one vertex on level $i$. By the help of the similar consideration in case of vertices $D$ and $E$, we can calculate the number of  vertices on level $i+1$, recursively, without multiplicity.

Finally, we denote the sums of  vertices types $A$, $B$, $C$,  $D$ and  $E$ on level $i$  by   $a_i$, $b_i$, $c_i$, $d_i$ and $e_i$, respectively. 

Summarising the details $(i\geq4)$ and calculating the numbers of vertices in some lower levels $(i<4)$ from Table \ref{table:typeof_vertices}, we prove the Theorem~\ref{th:growing_type}.

\begin{theorem}\label{th:growing_type}
The growing of the numbers of the different types of the vertices are described by the system of linear inhomogeneous recurrence sequences  $(n\geq1)$
\begin{equation}\label{eq:seq01}
  \begin{array}{ccl}
a_{n+1}&=& a_n+b_n+3,\\
b_{n+1}&=& a_n+2b_n,\\
c_{n+1}&=& \frac13 a_n+c_n+\frac23 d_n,\\
d_{n+1}&=& \frac12 b_n+\frac32c_n+2d_n+\frac52e_n,\\
e_{n+1}&=& 3 c_n+4d_n+6 e_n,
  \end{array}
\end{equation}
with zero initial values.
\end{theorem}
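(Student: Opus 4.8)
The plan is to separate the five sequences into the two boundary sequences $(a_n,b_n)$, which are controlled by the three triangular faces of \hpp, and the three interior sequences $(c_n,d_n,e_n)$, which are controlled by the icosahedral vertex figures of Figure~\ref{fig:gowing3d_icosa}. In each case a single counting step from level $n$ to level $n+1$ suffices. The one structural point to keep in mind throughout is that an interior vertex of type $C$, $D$ or $E$ is joined to level $n$ by $3$, $2$ or $1$ edges respectively; hence when one sums over the ascendants of a level-$(n+1)$ vertex, a type-$C$ vertex is produced three times, a type-$D$ vertex twice and a type-$E$ vertex once, and dividing by these in-degrees is precisely what produces the fractional coefficients in \eqref{eq:seq01}.

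I would first treat $a_n$ and $b_n$. By the description in Section~\ref{sec:introduction} each face of \hpp is a hyperbolic Pascal triangle on $\{4,5\}$, and the three faces meet only along the three winger edges, which carry no vertex of type $A$ or $B$; thus $a_n$ and $b_n$ are exactly three times the per-face numbers $a_n^{\ast},b_n^{\ast}$. For one face two relations hold. Each pair of neighbouring vertices of row $n$ meets to form one new type-$A$ vertex, so the number of type-$A$ vertices in row $n+1$ is the number of vertices of row $n$ minus one, i.e. $a_{n+1}^{\ast}=a_n^{\ast}+b_n^{\ast}+1$. Equating the descending edges leaving row $n$ (three per type $A$, four per type $B$, two per winger) with the ascending edges entering row $n+1$ (two per type $A$, one per type $B$, one per winger) gives $2a_{n+1}^{\ast}+b_{n+1}^{\ast}+2=3a_n^{\ast}+4b_n^{\ast}+4$, whence $b_{n+1}^{\ast}=a_n^{\ast}+2b_n^{\ast}$. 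Multiplying both by three yields $a_{n+1}=a_n+b_n+3$ and $b_{n+1}=a_n+2b_n$.

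Next I would read the interior recurrences off the vertex figures. Fix an interior vertex $V$ on level $i$. Its ascendants on level $i-1$ occupy a face, an edge, or a single vertex of its icosahedron according as $V$ has type $C$, $D$ or $E$, and all other icosahedron vertices lie on level $i+1$. If a descendant $X$ is joined to an ascendant $W_{i-1}$ by an icosahedron edge, then $X$ and $W_{i-1}$ are opposite corners of a cube face through $V$, so $W_{i-1}$ reaches $X$ by two length-two paths and $X$ gains a second upward edge; if $X$ is moreover the apex of an icosahedral face whose other two vertices are both ascendants, a third upward edge appears. Hence each descendant is of type $C$, $D$ or $E$ according as it is adjacent to two, one, or no ascendants, and evaluating this on the three ascendant-configurations gives the local production table
\begin{equation*}
C\mapsto(3C,3D,3E),\qquad D\mapsto(2C,4D,4E),\qquad E\mapsto(0C,5D,6E),
\end{equation*}
whose row sums $9,10,11$ agree with $12$ minus the number of ascendants. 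Adjoining the boundary-to-interior step of Figure~\ref{fig:from_border}, in which each type-$A$ vertex produces one type-$C$ vertex and each type-$B$ vertex one type-$D$ vertex, then summing each column and dividing the $C$-, $D$- and $E$-columns by $3$, $2$ and $1$, reproduces the last three lines of \eqref{eq:seq01}.

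It remains to settle the initial values and to flag the real obstacle. Level~$1$ is just the three winger neighbours of $V_0$, so $a_1=b_1=c_1=d_1=e_1=0$ as asserted; since the generic icosahedral picture is only valid for $i\ge4$, the levels $i<4$ are verified directly against Table~\ref{table:typeof_vertices}, and one checks that \eqref{eq:seq01} reproduces those entries. I expect the genuine difficulty to be the geometric input used above: proving from the combinatorics of the honeycomb $\{4,3,5\}$ that the ascendants of a type-$C$, $D$ or $E$ vertex really span a face, an edge or a vertex of the icosahedral vertex figure, and in particular that no descendant is adjacent to two non-consecutive ascendants. Once these three configurations are established, every entry of the production table is forced by the fixed incidence structure of the icosahedron, and the assembly of \eqref{eq:seq01} is routine.
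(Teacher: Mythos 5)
Your proof is correct and follows essentially the same route as the paper: it reads the local production rules off the icosahedral vertex figures (Figures~\ref{fig:gowing3d_icosa} and \ref{fig:gowing3d_2}) and the $\{4,5\}$ Pascal-triangle faces, divides by the in-degrees $3$, $2$, $1$ of types $C$, $D$, $E$ to remove multiplicity, and checks the low levels ($i<4$) directly, exactly as the paper does. Your production table $C\mapsto(3C,3D,3E)$, $D\mapsto(2C,4D,4E)$, $E\mapsto(0C,5D,6E)$ matches the coefficients of \eqref{eq:seq01} after clearing denominators, and the geometric facts you flag as the real content (that the ascendant set of an inner vertex spans a face, an edge or a vertex of its icosahedron, and that a descendant acquires no ascendants beyond those visible there) are precisely the ones the paper also establishes only by appeal to its figures.
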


Moreover, let $s_n$ be the number of all the vertices on level $n$, so that $s_0=1$ and
\begin{equation*} \label{eq:sn}
s_{n}= a_n+b_n+c_n+d_n+ e_n+3 \qquad (n\geq1).
\end{equation*}

Table \ref{table:typeof_vertices} shows the numbers of the vertices on levels up to 10.  

\begin{table}[!hb]
  \centering \setlength{\tabcolsep}{0.4em}
\begin{tabular}{|c||c|c|c|c|c|c|c|c|c|c|c|}
  \hline
 $n$   &  0  &  1 & 2 & 3 & 4 & 5 & 6  & 7  & 8   & 9   & 10  \\ 
 \hline \hline
 $a_n$ &  0  &  0 & 3 & 6 & 12 & 27 & 66 & 168 & 435 & 1134 & 2964  \\ \hline
 $b_n$ &  0  &  0 & 0 & 3 & 12 & 36 & 99 & 264 & 696 & 1827 & 4788  \\ \hline
 $c_n$ &  0  &  0 & 0 & 1 & 3 & 9 & 34  & 174  & 1128   & 8251   & 63315         \\ \hline
 $d_n$ &  0  &  0 & 0 & 0 & 3 & 24 & 177  & 1347  & 10467   & 82029    & 644808         \\ \hline
 $e_n$ &  0  &  0 & 0 & 0 & 3 & 39 & 357  & 2952  &  23622 & 186984   & 1474773       \\ \hline
 $s_n$ &  1  &  3 & 6 & 13 & 36 & 138 & 736  & 4908  &  36351 & 280228   & 2190651       \\ \hline
 \end{tabular}
\caption{\emph{Number of types of vertices $(n\leq10)$}\label{table:typeof_vertices}}
\end{table}

\begin{theorem}\label{theorem:numvertex4q}  
The sequences $\{a_n\}$, $\{b_n\}$, $\{c_n\}$, $\{d_n\}$, $\{e_n\}$  and $\{s_n\}$ can be described by the same fifth order linear homogeneous recurrence sequence 
\begin{equation}\label{recurcde}
x_n=12x_{n-1}-37x_{n-2}+37x_{n-3}-12x_{n-4}+x_{n-5} \qquad (n\ge6),
\end{equation}
the initial values are in Table \ref{table:typeof_vertices}.
The sequences $\{a_n\}$, $\{b_n\}$ can be also described by  
\begin{equation}
x_n=4x_{n-1}-4x_{n-2}+x_{n-3} \qquad (n\ge4).\label{recurab}
\end{equation}
\noindent Moreover, the explicit formulae
\begin{eqnarray*}
a_n&=&\left(-\frac{9}{2}+\frac{21}{10}\sqrt{5}\right)\alpha_1^n+\left(-\frac{9}{2}-\frac{21}{10}\sqrt{5}\right)\alpha_2^n+3, \\
b_n&=&\left(3-\frac{6}{5}\sqrt{5}\right)\alpha_1^n+\left(3+\frac{6}{5}\sqrt{5}\right)\alpha_2^n-3,\\
c_n&=&\left(-\frac{33}{10}+\frac{3}{2}\sqrt{5}\right)\alpha_1^n+
\left(-\frac{33}{10}-\frac{3}{2}\sqrt{5}\right)\alpha_2^n+ \left(\frac{122}{15}-\frac{21}{10}\sqrt{15}\right)\alpha_3^n\\
& & \qquad 
+\left(\frac{122}{15}+\frac{21}{10}\sqrt{15}\right)\alpha_4^n+\frac13, \\
d_n&=&\left(\frac{27}{5}-\frac{12}{5}\sqrt{5}\right)\alpha_1^n+
\left(\frac{27}{5}+\frac{12}{5}\sqrt{5}\right)\alpha_2^n+ \left(-\frac{213}{20}+\frac{11}{4}\sqrt{15}\right)\alpha_3^n\\
& & \qquad+
\left(-\frac{213}{20}-\frac{11}{4}\sqrt{15}\right)\alpha_4^n-\frac32, \\
e_n&=&\left(-\frac{21}{10}+\frac{9}{10}\sqrt{5}\right)\alpha_1^n+
\left(-\frac{21}{10}-\frac{9}{10}\sqrt{5}\right)\alpha_2^n+ \left(\frac{31}{10}-\frac{4}{5}\sqrt{15}\right)\alpha_3^n\\
& & \qquad+
\left(\frac{31}{10}+\frac{4}{5}\sqrt{15}\right)\alpha_4^n+1, \\
s_n&=&\left(-\frac{3}{2}+\frac{9}{10}\sqrt{5}\right)\alpha_1^n+
\left(-\frac{3}{2}-\frac{9}{10}\sqrt{5}\right)\alpha_2^n+ \left(\frac{7}{12}-\frac{3}{20}\sqrt{15}\right)\alpha_3^n\\
& & \qquad+
\left(\frac{7}{12}+\frac{3}{20}\sqrt{15}\right)\alpha_4^n+\frac{17}6 \\
\end{eqnarray*}
are valid, where $\alpha_1=(3+\sqrt{5})/2$, $\alpha_2=(3-\sqrt{5})/2$, $\alpha_3=4+\sqrt{15}$ and $\alpha_4=4-\sqrt{15}$.
\end{theorem}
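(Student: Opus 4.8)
The plan is to recast the coupled system \eqref{eq:seq01} as a single affine matrix recurrence and to read everything off from the characteristic polynomial of its coefficient matrix. Writing $\vec v_n=(a_n,b_n,c_n,d_n,e_n)^{\mathsf T}$, the relations \eqref{eq:seq01} become $\vec v_{n+1}=M\vec v_n+\vec c$ for $n\ge1$, with $\vec c=(3,0,0,0,0)^{\mathsf T}$ and
\[
M=\begin{pmatrix}
1&1&0&0&0\\[-1pt]
1&2&0&0&0\\[-1pt]
\tfrac13&0&1&\tfrac23&0\\[-1pt]
0&\tfrac12&\tfrac32&2&\tfrac52\\[-1pt]
0&0&3&4&6
\end{pmatrix}.
\]
Since the first two equations never involve $c_n,d_n,e_n$, the matrix $M$ is block lower triangular with diagonal blocks $M_{11}=\bigl(\begin{smallmatrix}1&1\\1&2\end{smallmatrix}\bigr)$ and $M_{22}=\bigl(\begin{smallmatrix}1&2/3&0\\3/2&2&5/2\\3&4&6\end{smallmatrix}\bigr)$. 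Hence its characteristic polynomial factors, and a short determinant computation gives $\det(\lambda I-M_{11})=\lambda^2-3\lambda+1$ and $\det(\lambda I-M_{22})=\lambda^3-9\lambda^2+9\lambda-1=(\lambda-1)(\lambda^2-8\lambda+1)$. Multiplying the blocks,
\[
p_M(\lambda)=(\lambda-1)(\lambda^2-3\lambda+1)(\lambda^2-8\lambda+1)=\lambda^5-12\lambda^4+37\lambda^3-37\lambda^2+12\lambda-1,
\]
whose roots are exactly $1,\alpha_1,\alpha_2,\alpha_3,\alpha_4$; this is already the characteristic polynomial of \eqref{recurcde}.

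To obtain the recurrences themselves I would first note that the constant vector $\vec v^\ast=(3,-3,\tfrac13,-\tfrac32,1)^{\mathsf T}$ solves $\vec v^\ast=M\vec v^\ast+\vec c$, so that $\vec u_n:=\vec v_n-\vec v^\ast$ obeys the purely homogeneous recurrence $\vec u_{n+1}=M\vec u_n$. By the Cayley--Hamilton theorem $p_M(M)=0$, and writing $E$ for the shift $Ex_n=x_{n+1}$ this means every component of $\vec u_n$ is annihilated by $p_M(E)$, i.e.\ satisfies \eqref{recurcde}. Because $\vec v^\ast$ is constant and $p_M(1)=0$, one has $p_M(E)\vec v^\ast=p_M(1)\vec v^\ast=\vec0$ as well, so the same relation is inherited by $\vec v_n=\vec u_n+\vec v^\ast$; tracking indices shows it is valid for $n\ge6$. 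Since constant sequences also satisfy \eqref{recurcde} (the coefficients on the right sum to $12-37+37-12+1=1$), the combination $s_n=a_n+b_n+c_n+d_n+e_n+3$ satisfies it too. For the shorter recurrence \eqref{recurab} I would run the same argument on the decoupled block $M_{11}$: here $1$ is \emph{not} a root of $\det(\lambda I-M_{11})=\lambda^2-3\lambda+1$, so to kill the constant part one appends the factor $(\lambda-1)$, obtaining $(\lambda-1)(\lambda^2-3\lambda+1)=\lambda^3-4\lambda^2+4\lambda-1$, which is precisely \eqref{recurab}.

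With the recurrences in hand the closed forms are routine. As the roots $1,\alpha_1,\alpha_2,\alpha_3,\alpha_4$ are distinct, every solution of \eqref{recurcde} has the shape $x_n=C_0+C_1\alpha_1^n+C_2\alpha_2^n+C_3\alpha_3^n+C_4\alpha_4^n$; for $a_n$ and $b_n$ the coefficients $C_3,C_4$ vanish because those two sequences are governed by $M_{11}$ alone. For each of the six sequences I would pin down the $C_i$ by imposing the initial data of Table~\ref{table:typeof_vertices}---three values ($n=0,1,2$) for $a_n,b_n$ and five values ($n=0,\dots,4$) for the rest---and solving the resulting linear system over $\mathbb{Q}(\sqrt5,\sqrt{15})$, which yields exactly the stated coefficients.

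The genuinely laborious step is this last coefficient fitting: the systems live over $\mathbb{Q}(\sqrt5,\sqrt{15})$ and keeping the $\sqrt5$- and $\sqrt{15}$-parts straight is where slips occur, so I would double-check each formula by substituting it back into \eqref{eq:seq01}. The one point that needs genuine care rather than mere computation is why the degree is $5$ rather than the $6$ one would expect for a generic constant forcing: the forcing $\vec c$ lies in the range of $I-M$ (witnessed by the existence of the constant solution $\vec v^\ast$), so the simple eigenvalue $1$ of $M$ absorbs the inhomogeneity without producing a resonant $n\cdot1^n$ term, and the minimal annihilating polynomial stays equal to $p_M$.
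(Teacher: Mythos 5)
Your argument is correct and reaches all the stated conclusions, but it takes a genuinely different route from the paper. The paper homogenizes the system by adjoining the constant sequence $v_n$ as a sixth variable, obtaining a $6\times 6$ matrix, and then invokes its general Theorem~\ref{th:recur} to conclude that the recurrence coefficients are those of the characteristic (and, after discarding the free parameter $t$, the minimal) polynomial of that matrix; the drop from degree $6$ to degree $5$ is explained there by the eigenvalue $1$ acquiring multiplicity $2$ in the enlarged matrix while the minimal polynomial needs it only once. You instead stay in dimension $5$, treat the system as an affine recurrence $\vec v_{n+1}=M\vec v_n+\vec c$, kill the inhomogeneity with the explicit particular solution $\vec v^\ast=(3,-3,\tfrac13,-\tfrac32,1)^{\mathsf T}$ (which indeed satisfies $(I-M)\vec v^\ast=\vec c$, and whose entries are exactly the constant terms in the stated explicit formulae), and exploit the block lower triangular structure to factor $p_M(\lambda)=(\lambda^2-3\lambda+1)(\lambda-1)(\lambda^2-8\lambda+1)$. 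Your factorization and the expansion to $\lambda^5-12\lambda^4+37\lambda^3-37\lambda^2+12\lambda-1$ check out, as does the derivation of \eqref{recurab} from the $2\times2$ block with the extra factor $(\lambda-1)$. What your route buys is transparency: the degree $5$ and the splitting of the roots into $\{\alpha_1,\alpha_2\}$ (for $a_n,b_n$) versus $\{\alpha_3,\alpha_4\}$ (entering only $c_n,d_n,e_n$) are visible directly from the block structure, and no general lemma about embedded recurrences is needed. The paper's route is less elementary here but yields a reusable tool applied again in the later theorem on $\hat s_n$.

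One detail to fix: you propose to pin down the coefficients from the initial data at $n=0,1,2$ (respectively $n=0,\dots,4$). That is off by one. The system \eqref{eq:seq01} only holds for $n\ge1$, so the homogeneous recurrence for $\vec u_n$ starts at $\vec u_1$, the closed forms are valid only for $n\ge1$, and indeed \eqref{recurab} fails at $n=3$ (e.g.\ $4a_2-4a_1+a_0=12\neq 6=a_3$) and \eqref{recurcde} fails at $n=5$. Fitting at $n=0$ would therefore produce wrong coefficients (the stated formula for $a_n$ gives $-6$ at $n=0$, not $0$). Use $n=1,2,3$ and $n=1,\dots,5$ instead; your proposed back-substitution into \eqref{eq:seq01} would catch this, but the indices should be stated correctly.
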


For the proof of Theorem \ref{theorem:numvertex4q} we apply Theorem \ref{th:recur}.

\begin{theorem}\label{th:recur}
Let the real linear homogeneous recurrence sequences $a^{(j)}$ embedded in each other be given the following way ($k\geq 2,
\enskip i\geq 0,\enskip  j=1,2,\dots ,k)$  
\begin{eqnarray*}
        a^{(1)}_{i+1}&=&m_{1,1}a^{(1)}_i+m_{1,2}a^{(2)}_i+ \dots +m_{1,k}a^{(k)}_i \nonumber \\
        a^{(2)}_{i+1}&=&m_{2,1}a^{(1)}_i+m_{2,2}a^{(2)}_i+ \dots +m_{2,k}a^{(k)}_i \nonumber \\
        &\vdots & \\
        a^{(k)}_{i+1}&=&m_{k,1}a^{(1)}_i+m_{k,2}a^{(2)}_i+ \dots +m_{k,k}a^{(k)}_i, \nonumber \\
 \noalign{\noindent with initial values $a^{(j)}_0 \in \mathbb{R}$ and }   \nonumber \\
        r_{i+1}&=&{\alpha}_1a^{(1)}_i+{\alpha}_2a^{(2)}_i+ \dots +{\alpha}_{k} a^{(k)}_i, \qquad r_0 \in \mathbb{R}.
\end{eqnarray*}
   In a shorter form
\begin{eqnarray}
 \vec{a}_{i+1}&=&\vec{M}\vec{a}_i,\label{eq:am}\\
    r_{i+1}&=&\veca^T\vec{a}_i, \label{eq:ra}
\end{eqnarray}
where $\vec{M}=\{m_{i,j}\}_{k\times k}$, $\vec{a}_j=[a^{(1)}_j\enskip a^{(2)}_j \enskip \dots \enskip a^{(k)}_j]^T$, $ \veca=[\alpha _1 \enskip \alpha _2 \enskip \dots \enskip \alpha _k]^T$ and $rank(\vec{M})=k$.\\
\noindent If ${\beta}_i \in \mathbb{R}$ and
\begin{eqnarray}
      r_i={\beta}_1r_{i-1}+{\beta}_2r_{i-2}+ \dots +{\beta}_kr_{i-k}, \qquad (i\geq k)\label{eq:rr}
\end{eqnarray}
then ${\beta}_i$ are the coefficients of  characteristic polynomial of  matrix $\vec{M}$.

\noindent Moreover, if
the matrix $\vec{M}$ has $\ell$ distinct eigenvalues with one algebraic multiplicity and ${\gamma}_j$ $(1\leq j\leq \ell)$ are the coefficients of  minimal polynomial of $\vec{M}$, then
\begin{eqnarray}
      r_i={\gamma}_1r_{i-1}+{\gamma}_2r_{i-2}+ \dots +{\gamma}_{\ell}r_{i-\ell} \qquad (i\geq k \geq \ell).\label{eq:rrg}
\end{eqnarray}
\end{theorem}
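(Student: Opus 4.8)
The plan is to reduce the entire statement to the Cayley--Hamilton theorem applied to $\vec{M}$. First I would solve the matrix recurrence \eqref{eq:am}: since $\vec{a}_{i+1}=\vec{M}\vec{a}_i$, induction gives $\vec{a}_i=\vec{M}^i\vec{a}_0$, and substituting into \eqref{eq:ra} yields the closed form $r_i=\veca^T\vec{M}^{i-1}\vec{a}_0$ for $i\geq1$. Thus every term of $(r_i)$ is one fixed linear functional $\veca^T(\cdot)$ evaluated at the matrix-power iterates $\vec{M}^{i-1}\vec{a}_0$, so the whole assertion follows from the elementary fact that every fixed linear combination of the coordinates of $\vec{M}^{i}\vec{a}_0$ satisfies the scalar recurrence whose coefficients are read off from any polynomial annihilating $\vec{M}$. (I read the ``if\,\ldots\,then'' as the claim that $(r_i)$ \emph{does} obey \eqref{eq:rr} with the characteristic coefficients; a literal uniqueness reading would already be contradicted by \eqref{eq:rrg}, and forcing uniqueness would require the minimal polynomial to have full degree $k$.)

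Concretely, write the characteristic polynomial of $\vec{M}$ as $p(\lambda)=\lambda^k-\beta_1\lambda^{k-1}-\cdots-\beta_k$, so that Cayley--Hamilton gives $p(\vec{M})=\vec{0}$. I would then compute, for $i\geq k+1$,
\[
r_i-\sum_{j=1}^{k}\beta_j r_{i-j}=\veca^T\Bigl(\vec{M}^{i-1}-\sum_{j=1}^{k}\beta_j\vec{M}^{i-1-j}\Bigr)\vec{a}_0=\veca^T\vec{M}^{\,i-1-k}\,p(\vec{M})\,\vec{a}_0=\vec{0},
\]
which is exactly \eqref{eq:rr} with the $\beta_j$ the characteristic coefficients. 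The hypothesis $\mathrm{rank}(\vec{M})=k$ enters here: it makes $\vec{M}$ invertible, so $\beta_k=(-1)^{k+1}\det\vec{M}\neq0$ (the recurrence is genuinely of order $k$) and, interpreting the initial value as $r_0=\veca^T\vec{M}^{-1}\vec{a}_0$, the factorisation stays valid at $i=k$ through the negative power $\vec{M}^{-1}$, which is why the asserted range is $i\geq k$.

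For the minimal-polynomial refinement I would run the identical computation with $p$ replaced by the minimal polynomial $m(\lambda)=\lambda^{\ell}-\gamma_1\lambda^{\ell-1}-\cdots-\gamma_{\ell}$ of $\vec{M}$. The hypothesis that $\vec{M}$ have $\ell$ distinct eigenvalues each appearing with multiplicity one means that $m$ is squarefree of degree $\ell$ (equivalently, $\vec{M}$ is diagonalisable), so $\deg m=\ell$ equals the number of distinct eigenvalues. Since $m(\vec{M})=\vec{0}$ holds by definition, the same factorisation $\veca^T\vec{M}^{\,i-1-\ell}\,m(\vec{M})\,\vec{a}_0=\vec{0}$ delivers \eqref{eq:rrg} for $i\geq k\geq\ell$, again absorbing the lower boundary via invertibility.

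The only genuinely delicate point I anticipate is the treatment of the lower index boundary, that is, the interplay between the freely written initial value $r_0$ and the assumption $\mathrm{rank}(\vec{M})=k$: away from the boundary ($i\geq k+1$ for \eqref{eq:rr}, $i\geq\ell+1$ for \eqref{eq:rrg}) the Cayley--Hamilton factorisation is automatic, and invertibility is exactly what lets one push the identity down to the stated starting index. A secondary point is the correct reading of the eigenvalue hypothesis — one must argue that it forces $\deg m=\ell$, so that \eqref{eq:rrg} is the shortest recurrence common to all of $a^{(1)},\dots,a^{(k)}$ and to $r$.
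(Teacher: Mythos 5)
Your proposal is correct, but it proves the \emph{converse} of the implication that the paper's proof establishes, and you are right to flag the ambiguity: read literally, Theorem~\ref{th:recur} asserts that a degree-$k$ recurrence \eqref{eq:rr} for $r_i$ forces the $\beta_j$ to be the characteristic coefficients, and the paper proves exactly that direction --- it expresses each $r_{i-j}$ as $\veca^T\vec{M}^{k-(j+1)}\vec{a}_{i-k}$ (using $\vec{M}^{-1}$ for $j=k$, which is where $rank(\vec{M})=k$ enters, just as in your boundary discussion), substitutes into \eqref{eq:rr}, argues that $\veca$ and $\vec{a}_{i-k}$ may be taken arbitrary to conclude $\vec{M}^{k}=\sum_j\beta_j\vec{M}^{k-j}$, and then invokes Cayley--Hamilton. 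You instead start from Cayley--Hamilton ($p(\vec{M})=\vec{0}$, resp.\ $m(\vec{M})=\vec{0}$) and factor $r_i-\sum_j\beta_jr_{i-j}=\veca^T\vec{M}^{i-1-k}p(\vec{M})\vec{a}_0=0$, which is the direction actually \emph{used} later in the paper (to derive \eqref{recurcde} and \eqref{recurab} from the characteristic and minimal polynomials of $\vec{M}$), so your reading is the operationally relevant one. The two arguments share the same central identity and the same implicit convention $r_0=\veca^T\vec{M}^{-1}\vec{a}_0$ at the index boundary $i=k$; what yours buys is that it avoids the two weak points of the forward direction, namely that $\veca$ and $\vec{a}_0$ are in fact fixed in the hypotheses (so they cannot be ``any elements of $\mathbb{R}^k$'' without an extra quantification), and that a monic degree-$k$ annihilating polynomial is the characteristic polynomial only when the minimal polynomial has full degree --- a failure the paper itself exhibits when it finds a one-parameter family $\beta_1=12-t,\dots$ in the proof of Theorem~\ref{theorem:numvertex4q}. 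Your observation that uniqueness is incompatible with \eqref{eq:rrg} whenever $\ell<k$ is exactly this point.
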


\begin{proof}
From \eqref{eq:am} and \eqref{eq:ra} we obtain, that
  \begin{eqnarray*}
      r_i=\veca ^T\vec{a}_{i-1}=\veca ^T\vec{M}\vec{a}_{i-2}&=&\veca ^T\vec{M}^2\vec{a}_{i-3}=\dots =\veca ^T\vec{M}^{k-1}\vec{a}_{i-k},\\
      r_{i-j}&=&\veca ^T\vec{M}^{k-(j+1)}\vec{a}_{i-k}  \qquad (j=0,1,\dots ,k-1).\\[2mm]
 \noalign{\noindent It follows from $rank(\vec{M})=k$ (has inverse), that}\\[-3.5mm]
      r_{i-k}&=&\veca ^T\vec{M}^{-1}\vec{a}_{i-k}.
  \end{eqnarray*}
We substitute the results into (\ref{eq:rr}),
 \begin{eqnarray*}
   \veca ^T\vec{M}^{k-1}\vec{a}_{i-k}&=&{\beta}_1 \veca ^T\vec{M}^{k-2}\vec{a}_{i-k}+ \dots + {\beta}_j \veca ^T\vec{M}^{k-(j+1)}\vec{a}_{i-k}+\dots
    +{\beta}_k \alpha ^T\vec{M}^{-1}\vec{a}_{i-k}  \\
   &=&\sum _{j=1}^k \Big( {\beta}_j \veca ^T\vec{M}^{k-(j+1)}\vec{a}_{i-k}\Big)=\veca ^T \Bigg( \sum _{j=1}^k \Big(
          {\beta}_j \vec{M}^{k-(j+1)}\Big) \Bigg)\vec{a}_{i-k} .
\end{eqnarray*}
We gain, that
\begin{eqnarray*}
   \veca ^T \Bigg(\vec{M}^{k-1}- \sum _{j=1}^k \Big(
          {\beta}_j \vec{M}^{k-(j+1)}\Big) \Bigg)  \vec{a}_{i-k}= 0 .
\end{eqnarray*}
As $\veca ^T$  and $\vec{a}_{i-k}$ can be any elements of the vector space $\mathbb{R}^k$,
\begin{eqnarray*}
   \vec{M}^{k-1}- \sum _{j=1}^k \Big(
          {\beta}_j \vec{M}^{k-(j+1)}\Big)  =   \vec{0},
\end{eqnarray*}
 thus
\begin{eqnarray}\label{eq:charM}
      \vec{M}^{k}&=&\sum _{j=1}^k  {\beta}_j \vec{M}^{k-j}.
\end{eqnarray}
Using the well-known  \emph{Cayley-Hamilton Theorem}, from \eqref{eq:charM} the equation
\begin{equation*}
x^k={\beta}_1x^{k-1}+{\beta}_2x^{k-2}+ \dots +{\beta}_k
\end{equation*}
is the characteristic equation of matrix $\vec{M}$. 
If the matrix $\vec{M}$ has $\ell$ distinct eigenvalues with one algebraic multiplicity and ${\gamma}_j$  are the coefficients of the minimal polynomial of $\vec{M}$, then the method of the proof can be followed step by step for $\ell$ elements of $r_i$ and for ${\gamma}_j$ coefficients too, thus \eqref{eq:rrg} also holds.
\end{proof}

\begin{proof}[Proof of Theorem \ref{theorem:numvertex4q}]
Let $v_n=3$ $(n\geq1)$ be a constant sequence and $v_0=1$. The value $v_n$  gives the number of  vertices type ``1" on  level $n$. Substitute  $3=v_n$ into the first equation of \eqref{eq:seq01} and complete the equations system \eqref{eq:seq01} with $v_{n+1}=v_n$. 
Than we have the system of linear homogeneous recurrence sequences $(n\geq1)$
\begin{equation*}\label{eq:seq01v}
  \begin{array}{ccl}
a_{n+1}&=& a_n+b_n+ v_n,\\
b_{n+1}&=& a_n+2b_n,\\
c_{n+1}&=& \frac13 a_n+c_n+\frac23 d_n,\\
d_{n+1}&=& \frac12 b_n+\frac32c_n+2d_n+\frac52e_n,\\
e_{n+1}&=& 3 c_n+4d_n+6 e_n,\\
v_{n+1}&=&  v_n
  \end{array}
\end{equation*}
and  
\begin{equation*}\label{eq:snv}
s_{n}= a_n+b_n+c_n+d_n+ e_n+v_n \qquad (n\geq0).
\end{equation*}

Using the results of Theorem \ref{th:recur} when   
$$\vec{M}=\begin{pmatrix}
1   & 1   & 0   & 0   & 0   & 1 \\ 
1   & 2   & 0   & 0   & 0   & 0 \\ 
\frac13 & 0   & 1   & \frac23 & 0   & 0 \\ 
0   & \frac12 & \frac32 & 2   & \frac52 & 0 \\ 
0   & 0   & 3   & 4   & 6   & 0 \\ 
0   & 0   & 0   & 0   & 0   & 1 \\ 
\end{pmatrix},$$
$\vec{a}_j=[a_j\enskip b_j\enskip c_j\enskip e_j\enskip v_j\enskip]^T$,  and $rank(\vec{M})=6$ 
we gain that the solutions of system of linear recurrence equations \eqref{eq:charM} are $\beta_1=12-t$, $\beta_2=-37+12t$, $\beta_3=37-37t$,
$\beta_4=-12+37t$, $\beta_5=1-12t$, $\beta_6=t$, where $t\in \mathbb{R}$. As $r_n$ was an arbitrary equation, $r_n$ can be $s_n$, $a_n$, \ldots\ $e_n$ with $\veca=(1,1,1,1,1,1)$, $\veca=(1,0,0,0,0,0)$, \ldots, $\veca=(0,0,0,0,1,0)$, respectively.  Moreover, let $t=0$, then we obtain the (degenerate) recurrence sequence \eqref{recurcde}.

As $a_{n+1}$, $b_{n+1}$ and $v_{n+1}$ are independent from $c_n$, $d_n$ and $e_n$, they form a system of homogeneous recurrence equations again with matrix 
$\vec{M}_{ab}=\left( \begin{smallmatrix}
1   & 1   & 1    \\ 
1   & 2   & 0    \\ 
0   & 0   & 1    \\ 
\end{smallmatrix}\right)$. 
Using the results of  Theorem \ref{th:recur} again we gain $\beta_1=4$, $\beta_2=-4$, $\beta_3=1$, so the equation \eqref{recurab} holds.

The characteristic equation of \eqref{recurcde} is
\begin{equation}\label{eq:mini}
x^5={12}x^{4}-37x^{3}+37x^2-12x +1
\end{equation} 
and its solutions are $\alpha_1=(3+\sqrt{5})/2$, $\alpha_2=(3-\sqrt{5})/2$, $\alpha_3=4+\sqrt{15}$ and $\alpha_4=4-\sqrt{15}$ and $\alpha_5=1$.  We mention that equation \eqref{eq:mini} is the minimal polynomial of the matrix $\vec{M}$. (The roots of the  characteristic equation of \eqref{recurab}, $x^3=4x^2-4x +1$  are also $\alpha_1$, $\alpha_2$ and $\alpha_5$.) A suitable linear combination of their $n^{\text{th}}$ power provide the explicit formulae (\cite{sho}).     
\end{proof}

\begin{rem}
In Pascal's pyramid the equations system \eqref{eq:seq01} also holds with suitable initial values. In this case, there is no type vertices $B$, $D$ and $E$, so $b_i=d_i=e_i=0$ for any $i$. Thus the hyperbolic Pascal pyramid is not only the geometric but also the algebraic generalization of Pascal's pyramid.   
\end{rem}

\begin{rem}
The ratios of numbers of vertices from level to level tend to the biggest eigenvalue of the matrix $\vec{M}$. So, the growing ratio of \hpp is $\alpha_3=4+\sqrt{15}\approx 7.873$, on the contrary it is $1$ in case of the Euclidean case.    
\end{rem}

\section{Sum of the values on levels in the hyperbolic Pascal pyramid}

The sum of the values of the elements on  level $n$ in the classical Pascal's pyramid is $3^n$ (\cite{B}). In this section we determine it in case of the hyperbolic Pascal pyramid. 

Denote respectively $\hat{a}_{n}$, $\hat{b}_{n}$, $\hat{c}_{n}$, $\hat{d}_{n}$ and $\hat{e}_{n}$ the sums of the values of  vertices type $A$, $B$, $C$, $D$ and $E$ on level $n$, and let $\hat{s}_{n}$ be the sum of all the values. 
From Figures \ref{fig:gowing3d_1} and \ref{fig:gowing3d_2} the results of Theorem \ref{th:recursum} can be read directly. For example for all  vertices type $A$, $B$ and $1$ on level $i$ generate two vertices type $A$ on level $i+1$ and it follows the first equation of \eqref{eq:seq02}.
Table \ref{table:sumof_vertices} shows the sum of the values of the vertices on levels up to 10.  

\begin{theorem} \label{th:recursum}
If $n\geq1$, then
\begin{equation}\label{eq:seq02}
  \begin{array}{ccl}
\hat{a}_{n+1}&=& 2\hat{a}_n+2\hat{b}_n+6,\\
\hat{b}_{n+1}&=& \hat{a}_n+2\hat{b}_n,\\
\hat{c}_{n+1}&=&  \hat{a}_n+3\hat{c}_n+2 d_n,\\
\hat{d}_{n+1}&=&  \hat{b}_n+3c_n+4\hat{d}_n+5\hat{e}_n,\\
\hat{e}_{n+1}&=& 3 \hat{c}_n+4\hat{d}_n+6 \hat{e}_n
  \end{array}
\end{equation}
with zero initial values.
\end{theorem}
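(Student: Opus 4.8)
\noindent\emph{Proof proposal (sketch).} The plan is to deduce \eqref{eq:seq02} from a single ``additive'' principle together with the local edge-multiplicities that already underlie the proof of Theorem~\ref{th:growing_type}. First I would record that ${\cal G}_{\cal P}$ is bipartite: it is a subgraph of the edge graph of the cubic honeycomb, and a cube complex is $2$-colourable, so every mosaic edge joins two consecutive levels and no edge lies inside a level. Hence any shortest $V_0$-path to a vertex $w$ on level $n{+}1$ reaches $w$ through a neighbour on level $n$, and every level-$n$ neighbour of $w$ extends shortest paths to $w$. Writing $\mathrm{val}(\cdot)$ for the label (number of shortest paths) and $L_n$ for level $n$, this yields the Pascal-type rule
\[ \mathrm{val}(w)=\sum_{u\in L_n,\ u\sim w}\mathrm{val}(u) \qquad (w\in L_{n+1}). \]

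Next, I would fix a type $X\in\{A,B,C,D,E\}$, sum this identity over all type-$X$ vertices $w$ of level $n{+}1$, and interchange the order of summation. Denoting by $\mu_X(u)$ the number of edges joining $u\in L_n$ to type-$X$ vertices of $L_{n+1}$, this gives
\[ \hat x_{n+1}=\sum_{u\in L_n}\mathrm{val}(u)\,\mu_X(u). \]
The crucial structural input---the very fact exploited in Theorem~\ref{th:growing_type}---is that $\mu_X(u)$ depends only on the type of $u$. So the whole computation reduces to tabulating these multiplicities from the vertex figures in Figures~\ref{fig:gowing3d_1}, \ref{fig:gowing3d_2} and \ref{fig:gowing3d_icosa}.

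I would read the multiplicities off as follows. On the faces (hyperbolic Pascal triangles) every vertex of type $A$, $B$ or winger emits two edges to type-$A$ vertices, while type-$B$ vertices are fed only by $A$ and $B$ with multiplicities $1$ and $2$; the three corner (winger) vertices lie on the lateral edges of the tetrahedral ${\cal P}$ and are each reached by a unique shortest path, hence carry value $1$, so their values sum to $3$. For the interior it is convenient to clear the sharing factors in \eqref{eq:seq01}: multiplying the $c$-, $d$- and $e$-lines by the in-degrees $3$, $2$ and $1$ of types $C$, $D$, $E$ yields the genuine incoming-edge counts
\begin{gather*}
3c_{n+1}=a_n+3c_n+2d_n, \qquad 2d_{n+1}=b_n+3c_n+4d_n+5e_n,\\
e_{n+1}=3c_n+4d_n+6e_n,
\end{gather*}
whence $\mu_C(A)=1,\mu_C(C)=3,\mu_C(D)=2$; $\mu_D(B)=1,\mu_D(C)=3,\mu_D(D)=4,\mu_D(E)=5$; $\mu_E(C)=3,\mu_E(D)=4,\mu_E(E)=6$, all other multiplicities vanishing (and the interior out-degrees $9,10,11$ correctly completing the degree $12$). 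Substituting these into the weighted sums, together with the winger value-sum $3$, produces the five equations of \eqref{eq:seq02}; the zero initial values hold because levels $0$ and $1$ carry no vertex of these types.

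The step I expect to be the real obstacle is the geometric justification of the interior multiplicities---that every interior vertex of a given type emits the stated number of edges of each kind to the next level, uniformly---which rests on the icosahedral vertex figure and on the ``two shortest paths across a face-diagonal'' property used to define the types $C,D,E$. A second, subtler pitfall is to keep the \emph{un-normalised} edge multiplicities $\mu_X$ separate from the sharing-normalised coefficients of \eqref{eq:seq01}: since a value-sum weights each new vertex once by its full label, the fractional corrections disappear, and the two interior cross-contributions are naturally weighted by the value-sums $\hat c_n$ and $\hat d_n$ rather than by the cardinalities $c_n,d_n$. I would finish by checking the resulting coefficients against the first levels of Table~\ref{table:sumof_vertices}---for instance the values forced on levels $3$ and $4$---which pins the constants down unambiguously.
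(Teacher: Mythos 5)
Your proposal is correct and takes essentially the same route as the paper: the paper's (one-sentence) proof likewise reads the per-type edge multiplicities off Figures~\ref{fig:gowing3d_1} and \ref{fig:gowing3d_2}, and your bipartiteness remark plus the double-counting identity $\hat x_{n+1}=\sum_{u}\mathrm{val}(u)\mu_X(u)$ is just a careful write-up of that, with the interior multiplicities correctly recovered by clearing the in-degree factors $3,2,1$ from \eqref{eq:seq01}. Note that your derivation yields $2\hat d_n$ and $3\hat c_n$ in the third and fourth equations, which is what the paper must intend (Table~\ref{table:sumof_vertices} confirms it, e.g.\ $\hat d_4=6+3\cdot 6+0+0=24$), so the unhatted $d_n$, $c_n$ in the printed statement are typos.
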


Table \ref{table:sumof_vertices} shows the sums of  values on  levels up to 10.

\begin{table}[!htb]
  \centering \setlength{\tabcolsep}{0.4em}
\begin{tabular}{|c||c|c|c|c|c|c|c|c|c|c|c|}
  \hline
 $n$   &  0  &  1 & 2 & 3 & 4 & 5 & 6  & 7  & 8   & 9   & 10  \\ 
 \hline \hline
 $a_n$ &  0  &  0 & 6 & 18 & 54 & 174 & 582 & 1974 & 6726 & 22950 & 78342  \\ \hline
 $b_n$ &  0  &  0 & 0 & 6 & 30 & 114 & 402 & 1386 & 4746 & 16218 & 55386  \\ \hline
 $c_n$ &  0  &  0 & 0 & 6 & 36 & 210 & 1452  & 12138  & 114684   & 1147002   & 11729148         \\ \hline
 $d_n$ &  0  &  0 & 0 & 0 & 24 & 324 & 3600  & 38148  & 398112   & 4132596    & 42818208        \\ \hline
 $e_n$ &  0  &  0 & 0 & 0 & 18 & 312 & 3798 & 41544 & 438270  &  4566120 & 47368110        \\ \hline
 $s_n$ &  1  &  3 & 9 & 33 & 165 & 1137 & 9837 & 95193  & 962541  &  9884889 & 102049197         \\ \hline
 \end{tabular}
\caption{\emph{Sum of  values of  vertices $(n\leq10)$}\label{table:sumof_vertices}}
\end{table}

Let $\hat{s}_n$ be the sum of the values of all the vertices on level $n$, then $\hat{s}_0=1$ and 
\begin{equation*}
\hat{s}_n = \hat{a}_n+\hat{b}_n+\hat{c}_n+\hat{d}_n+\hat{e}_n+3 \qquad (n\geq1).
\end{equation*}

\begin{theorem}
The sequences $\{\hat{a}_n\}$, $\{\hat{b}_n\}$, $\{\hat{c}_n\}$, $\{\hat{d}_n\}$, $\{\hat{e}_n\}$  and $\{\hat{s}_n\}$ can be described by the same sixth order linear homogeneous recurrence sequence 
\begin{equation*}\label{recurcdehat}
\hat{x}_n = 18 \hat{x}_{n-1}-99\hat{x}_{n-2}+226\hat{x}_{n-3}-224\hat{x}_{n-4}+92\hat{x}_{n-5}-12\hat{x}_{n-6} \qquad (n\ge7),
\end{equation*}
the initial values are in Table \ref{table:sumof_vertices}.
The sequences $\{\hat{a}_n\}$, $\{\hat{b}_n\}$ can be also described by  
\begin{equation*}\label{recurabhat}
\hat{x}_n = 5 \hat{x}_{n-1}-6\hat{x}_{n-2}+2\hat{x}_{n-3} \qquad (n\ge4).
\end{equation*}
\noindent The explicit formulae
\begin{eqnarray*}
\hat{a}_n &=&\left(\frac92\,\sqrt {2}-6 \right)  \left( 2+\sqrt {2} \right) ^{n}+ \left(\frac92\,\sqrt {2}-6 \right)  \left( 2-\sqrt {2} \right) ^{n}+6,\\
\hat{b}_n&=& \left(\frac92 -3\,\sqrt {2} \right)  \left( 2+\sqrt {2} \right)^{n} + \left( \frac92+3\,\sqrt {2} \right)  \left( 2-\sqrt {2} \right) ^{n}  -6 
\end{eqnarray*}
and
\begin{multline*}
\hat{s}_n=3+ \frac32\left(\sqrt {2}-1 \right)  \left( 2+\sqrt {2} \right) ^{n}-
 \frac32\left(\sqrt {2}+1 \right) \left( 2-\sqrt {2} \right)^{n} +\delta_4\alpha_4+\delta_5\alpha_5+\delta_6\alpha_6,
\end{multline*}
where if $\varphi=\arctan(9\sqrt{101}/128)/3$, then
$\alpha_4=(-\sqrt{85}\cos(\varphi)-\sqrt{3}\sqrt{85}\sin(\varphi)+13)/3 \approx 0.240683$,
$\alpha_5=(-\sqrt{85}\cos(\varphi)+\sqrt{3}\sqrt{85}\sin(\varphi)+13)/3 \approx 2.408387$ and 
$\alpha_6=(2\sqrt{85}\cos(\varphi)+13)/3 \approx 10.350930$
are the roots of the equation ${x}^{3}-13{x}^{2}+28x-6=0$, moreover $\delta_4\approx 1.137480 $, $\delta_5\approx -0.144699$ and $\delta_6\approx 0.007219$.
\end{theorem}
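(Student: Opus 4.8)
The plan is to imitate the proof of Theorem~\ref{theorem:numvertex4q} line by line and reduce everything to a single application of Theorem~\ref{th:recur}. First I would homogenise the system \eqref{eq:seq02}: introduce the constant sequence $v_n=3$ $(n\ge1)$ with $v_0=1$, rewrite the term $6$ in the first line as $2v_n$, and adjoin $v_{n+1}=v_n$. Reading off the (self-referential) coupling of the level-sums then produces a \emph{closed} homogeneous system $\hat{\vec{a}}_{n+1}=\hat{\vec{M}}\hat{\vec{a}}_n$ in the six unknowns $\hat{\vec{a}}_n=[\hat a_n\ \hat b_n\ \hat c_n\ \hat d_n\ \hat e_n\ v_n]^T$, with
$$
\hat{\vec{M}}=\begin{pmatrix}
2&2&0&0&0&2\\
1&2&0&0&0&0\\
1&0&3&2&0&0\\
0&1&3&4&5&0\\
0&0&3&4&6&0\\
0&0&0&0&0&1
\end{pmatrix},
\qquad
\hat s_n=[\,1\ 1\ 1\ 1\ 1\ 1\,]\,\hat{\vec{a}}_n .
$$
Since the eigenvalues multiply to $12$, one has $\det\hat{\vec{M}}=12\neq0$, so the hypothesis $rank(\hat{\vec{M}})=6$ of Theorem~\ref{th:recur} is satisfied.

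With this in hand, Theorem~\ref{th:recur} immediately yields that every sequence obtained as $r_{n+1}=\veca^T\hat{\vec{a}}_n$ satisfies the linear recurrence whose coefficients are those of the characteristic polynomial of $\hat{\vec{M}}$: taking $\veca=(1,1,1,1,1,1)^T$ gives $\hat s_n$, while the six standard basis vectors give $\hat a_n,\dots,\hat e_n,v_n$. Thus all six sequences obey one common recurrence, and only the characteristic polynomial of $\hat{\vec{M}}$ remains to be computed.

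The central computation is that polynomial. Because $v$ and the pair $(\hat a,\hat b)$ do not depend on $(\hat c,\hat d,\hat e)$, the matrix $\hat{\vec{M}}$ is block lower triangular, so its characteristic polynomial factors as the product of the $(\hat a,\hat b,v)$-block polynomial $x^3-5x^2+6x-2$ and the $(\hat c,\hat d,\hat e)$-block polynomial $x^3-13x^2+28x-6$. Multiplying these gives $x^6-18x^5+99x^4-226x^3+224x^2-92x+12$, which is exactly the claimed sixth-order recurrence. For $\hat a_n,\hat b_n$ alone I would note, exactly as \eqref{recurab} was obtained, that $(\hat a,\hat b,v)$ is itself a closed subsystem with matrix $\hat{\vec{M}}_{ab}=\bigl(\begin{smallmatrix}2&2&2\\1&2&0\\0&0&1\end{smallmatrix}\bigr)$ of characteristic polynomial $x^3-5x^2+6x-2$, giving the third-order recurrence.

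For the explicit formulae I would factor each cubic. The first, $x^3-5x^2+6x-2=(x^2-4x+2)(x-1)$, has the clean roots $2\pm\sqrt2$ and $1$, so $\hat a_n,\hat b_n$ are combinations of $(2\pm\sqrt2)^n$ and a constant, with the two coefficients (and the particular constant accounting for $v_n=3$) pinned down by the initial values of Table~\ref{table:sumof_vertices}. The remaining sequences $\hat c_n,\hat d_n,\hat e_n,\hat s_n$ are combinations of $\{(2\pm\sqrt2)^n,\,1,\,\alpha_4^n,\alpha_5^n,\alpha_6^n\}$, where $\alpha_4,\alpha_5,\alpha_6$ are the roots of $x^3-13x^2+28x-6$; the six coefficients follow by matching the six initial values $\hat s_0,\dots,\hat s_5$ (and analogously for $\hat c,\hat d,\hat e$). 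The main obstacle is precisely here: the cubic $x^3-13x^2+28x-6$ is irreducible with three real roots (the casus irreducibilis), so its roots admit only the trigonometric closed form $\alpha_j=(\dots\cos\varphi\pm\sqrt3\,\sqrt{85}\,\sin\varphi+13)/3$ recorded in the statement, and solving the resulting $3\times3$ linear system over this field forces the coefficients $\delta_4,\delta_5,\delta_6$ to be given numerically rather than in a tidy radical form.
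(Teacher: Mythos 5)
Your proposal is correct and follows essentially the same route as the paper: homogenising \eqref{eq:seq02} with the constant sequence $v_n$, applying Theorem~\ref{th:recur} to the identical $6\times 6$ matrix $\vec{M}$ (and the $3\times 3$ subsystem $\vec{M}_{ab}$ for $\hat a_n,\hat b_n$), and reading off the recurrences from the characteristic polynomial $(x-1)(x^2-4x+2)(x^3-13x^2+28x-6)$ before fitting the explicit formulae to the initial values. Your added remarks on the block-triangular structure and on why the irreducible cubic forces numerical values for $\delta_4,\delta_5,\delta_6$ only make explicit what the paper leaves implicit.
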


\begin{proof}
Follow the proof of  Theorem \ref{theorem:numvertex4q} step by step.
Using the results of  Theorem \ref{th:recursum} when   
$$\vec{M}=\begin{pmatrix}
2   & 2   & 0   & 0   & 0   & 2 \\ 
1   & 2   & 0   & 0   & 0   & 0 \\ 
1   & 0   & 3   & 2   & 0   & 0 \\ 
0   & 1   & 3   & 4   & 5   & 0 \\ 
0   & 0   & 3   & 4   & 6   & 0 \\ 
0   & 0   & 0   & 0   & 0   & 1 \\ 
\end{pmatrix},\qquad
\vec{M}_{ab}=\left( \begin{matrix}
2   & 2   & 2    \\ 
1   & 2   & 0    \\ 
0   & 0   & 1    \\ 
\end{matrix}\right),$$
we gain that $\beta_1 = 18$, $\beta_2 = -99$, $\beta_3 = 226$, $\beta_4 = -224$, $\beta_5 = 92$, $\beta_6 = -12$ and the characteristic polynomial of $\vec{M}$ is $\left( x-1 \right)  \left( {x}^{2}-4\,x+2 \right)  \left( {x}^{3}-13\,{x}^{2}+28\,x-6 \right)$ and its roots are 
$\alpha_i$ $(i=1, \dots , 6)$. As the exact values of coefficients $\delta_j$ 
 $(j=4,\, 5,\, 6)$ are very complicated (for sequences $\hat{c}_n$, $\hat{d}_n$, $\hat{e}_n$ also), we give their numerical values in case sequence $\hat{s}_n$ by the help of MAPLE software. But from the characteristics polynomial of $\vec{M}_{ab}$ the sequences $\hat{a}_n$ and $\hat{b}_n$  are given in exact explicit forms. 
\end{proof}

\begin{rem}
The growing ratio of  values of \hpp is $\approx\!10.351$, while it is $3$ in case of the Euclidean case.    
\end{rem}

\end{document}